\newcommand{\End}{\operatorname{End}}
\newcommand{\bmin}{\displaystyle{\bigoplus^-}}
\newcommand{\rigZ}{{]Z_s[_{1-\epsilon}}}
\newcommand{\rigS}{{]S[_{1-\epsilon}}}
\newcommand{\dd}{\textup{d}}
\newcommand{\aloga}{A_{\textup{log},1}}
\newcommand{\Ker}{\operatorname{Ker}}
\newcommand{\res}{\operatorname{Res}}
\newcommand{\dR}{{\textup{dR}}}
\newcommand{\hdr}{H_{\dR}}
\newcommand{\et}{\textup{\'et}}
\newcommand{\het}{H_{\et}}
\newcommand{\dlog}{\operatorname{d\!\log}}
\newcommand{\pair}[1]{{\left\langle #1 \right\rangle}}
\newcommand{\gpair}[1]{\pair{#1}_{\gl}}
\newcommand{\hpair}[1]{\pair{#1}_{h}}
\newcommand{\apair}[1]{\pair{#1}_{\textup{aux}}}
\newcommand{\nekovar}{Nekov\'a\v r}
\newcommand{\OO}{\mathcal{O}}
\newcommand{\Div}{\operatorname{div}}
\newcommand{\gl}{{\textup{gl}}}
\def\htt_#1{H_#1^\otimes}
\newcommand{\col}{\textup{Col}}
\newcommand{\Hom}{\operatorname{Hom}}
\def\acoln_#1{\OO_{\col,#1}}
\def\Ocoln_#1{\Omega_{\col,#1}^1}
\newcommand{\XX}{\mathcal{X}}
\newcommand{\tr}{\operatorname{tr}}
\newcommand{\isom}{\cong}
\newcommand{\Q}{\mathbb{Q}}
\newcommand{\Qp}{\Q_p}
\newcommand{\Xbar}{\overline{X}}
\newcommand{\cy}{Y}
\newcommand{\cz}{Z}
\newcommand{\cZ}{\mathcal{Z}}
\newcommand{\Cbar}{\overline{C}}
\newcommand{\Fbar}{\overline{F}}
\newcommand{\DR}{\operatorname{DR}}
\newcommand{\Dst}{\operatorname{D}_{\textup{st}}}
\newcommand{\Dcris}{\operatorname{D}_{\textup{cris}}}
\newcommand{\hst}{H_{\textup{st}}}
\newcommand{\gipair}[1]{\pair{#1}_{\gl,U_v-\cz}}
\newtheorem{theorem}{Theorem}[section]
\newtheorem{proposition}[theorem]{Proposition}
\newtheorem{lemma}[theorem]{Lemma}
\newtheorem{corollary}[theorem]{Corollary}
\theoremstyle{definition}
\newtheorem{definition}[theorem]{Definition}
\newtheorem{remark}[theorem]{Remark}
\newtheorem{assumption}[theorem]{Assumption}
\numberwithin{equation}{section}
\begin{document}
\title{$p$-adic heights and Vologodsky integration}
\author{Amnon Besser}
\address{
Department of Mathematics\\
Ben-Gurion University of the Negev\\
P.O.B. 653\\
Be'er-Sheva 84105\\
Israel
}
\begin{abstract}
  we extend the result of~\cite{Bes99a} to the case of curves with semi-stable reduction. In this case, one can replace Coleman integration by Vologodsky integration to extend the Coleman-Gross definition of a $p$-adic height pairing. we show that this pairing still equals the one defined by Nekov\'a\v r.
\end{abstract}
\subjclass[2010]{Primary 11S80, 11G20, 14G40; Secondary 14G22, 14F40, 11S25}

\maketitle

\section{Introduction}
\label{sec:intro}

Let $K$ be a finite extension of $\Qp$ and let $X$ be a smooth variety over $K$. When $X$ has an integral model $\XX$ over the ring of integers $\OO_K$ of $K$, then past work of the author and others~\cite{Bes98a,Bes98b,Bes-deJ98,Bes-deJ02,Bes10,Bes99a} shows a close connection between $p$-adic arithmetic invariants of $X$ or $\XX$, such as syntomic regulators and $p$-adic heights, and the theory of Coleman integration~\cite{Col82,Col-de88,Bes99}.

The present work is a first in a series of papers aiming at removing the good reduction assumption above by replacing Coleman integration with Vologodsky integration~\cite{Vol01}, which is (to some extent) an extension of Coleman integration to the bad reduction case. In this first paper we will extend the equivalence, proved in~\cite{Bes99a}, between the Coleman-Gross height pairing on curves~\cite{Col-Gro89} and the Nekov\'a\v r height pairing~\cite{Nek93} to the semi-stable case, modifying the Coleman-Gross height pairing by substituting Vologodsky integration for Coleman integration (The fact that one can use Vologodsky integration to extend the scope of the Coleman-Gross height pairing was observed already in~\cite[Section 7]{Bes00}).

\newcommand{\Gal}{\operatorname{Gal}}
To state the main theorem more precisely, suppose now that $C$ is a smooth complete curve over a number field $F$ having the property that at every $p$-adic completion $K$ it has a model $\XX$ over $\OO_K$ with a strictly semi-stable reduction. This implies that the \'etale cohomology group $\het^1(\Cbar,\Qp)$, viewed as a representation of $\Gal(\Fbar/F)$ (here the bars denote extension of scalars to an algebraic closure),  is semi-stable, in the sense of Fontaine, at every prime above $p$. As a result, given certain auxiliary data recalled in Section~\ref{sec:height},  There is a well-defined Nekov\'a\v r height pairing~\cite{Nek93}
\newcommand{\hnek}{h_{\operatorname{Nek}}}
\newcommand{\hgc}{h_{\operatorname{GC}}}
\begin{equation*}
  \hnek: \Div_0(C)\times \Div_0(C) \to \Qp \;,
\end{equation*}
where $\Div_0(C)$ is the group of degree zero divisors on $C$. Note that this is a slight reformulation - The original Nekov\'a\v r pairing is between cohomology classes and to get the pairing on divisors one applies the \'etale Abel-Jacobi map. However, the description using mixed extensions, which we are going to use for the comparison, already uses the geometric source of the cohomology classes so it is appropriate to reformulate things in this way (see~\cite{Bes99a}).

When the models $\XX$ above are actually smooth over $\OO_K$, Coleman and Gross~\cite{Col-Gro89} give a construction of a height pairing with the same data. It is a sum of local height pairings $h_v$, one for each place $v$ of $F$. for $v$ dividing $p$ the construction has two main ingredients. The first is a certain projection operator, which is in fact the logarithm for the universal vectorial extension of the Jacobian of $C$, completed at $v$. The second ingredient is Coleman integration for certain forms with logarithmic singularities.

As the logarithm for the universal vectorial extension does not require an integral model of any sort, the first ingredient of the construction exists without any assumptions. Coleman integration may be replaced by Vologodsky integration~\cite{Vol01}. In fact, since iterated integrals are not used, these integrals were defined before Vologodsky by Zarhin~\cite{Zar96} and Colmez~\cite{Colm96}. We thus obtain a trivial extension of the Coleman-Gross height pairing to the case of a general smooth $C$ without any assumptions on the reductions above $p$:
\begin{equation*}
  \hgc:  \Div_0(C)\times \Div_0(C) \to \Qp \;,
\end{equation*}

 Our main result is as follows:

\begin{theorem}\label{mainthm}
  Suppose that $C/F$ is a smooth complete curve over a number field $F$ and with semi-stable reduction at each prime above $p$. Then with the same auxiliary data used to define them, the Nekov\'a\v r and Coleman-Gross height pairings are the same: For any two $y,z\in \Div_0(C)$ we have $\hnek(y,z) = \hgc(y,z)$.
\end{theorem}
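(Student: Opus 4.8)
\noindent\emph{Proof strategy.} The plan is to follow the argument of~\cite{Bes99a}, with the crystalline ($\varphi$-module) formalism replaced by the semi-stable ($(\varphi,N)$-module) one and Coleman integration replaced by Vologodsky integration. Both sides decompose as sums of local terms, $\hnek(y,z)=\sum_v h_v^{\operatorname{Nek}}(y,z)$ and $\hgc(y,z)=\sum_v h_v^{\operatorname{GC}}(y,z)$, the sums running over the places $v$ of $F$: for $\hgc$ this is the definition, while for $\hnek$ it follows from the localization sequence in continuous Galois cohomology together with \nekovar's presentation of the global pairing as the sum of the local pairings attached to the localizations of the relevant mixed extension of $\Gal(\Fbar/F)$-representations. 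At the finitely many $v\nmid p$ the two local terms coincide for exactly the reasons already established in~\cite{Bes99a}, the relevant local pairings there being of arithmetic-intersection type and involving no $p$-adic analysis. Hence the whole content of the theorem lies in the identity $h_v^{\operatorname{Nek}}=h_v^{\operatorname{GC}}$ at the primes $v\mid p$.

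Fix such a $v$, let $K$ be the completion of $F$ at $v$ and $\XX/\OO_K$ the strictly semi-stable model. The local Coleman--Gross term is built from the logarithm of the universal vectorial extension of the Jacobian of $C$ over $K$, which pins down a canonical splitting of the Hodge filtration on $\hdr^1(C)$, together with the Vologodsky integral $\int_y\omega_z$ of a form of the third kind $\omega_z$ with residue divisor $z$ and holomorphic part in the chosen isotropic complement. Since $\het^1(\Cbar,\Qp)$ is semi-stable at $v$, the mixed extension $M=M(y,z)$ produced from the \'etale Abel--Jacobi classes of $y$ and $z$ is semi-stable, so Fontaine's functor carries it to an admissible mixed filtered $(\varphi,N)$-module over $K$. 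The first step is to unwind $h_v^{\operatorname{Nek}}$ into an explicit formula on $\Dst(M)$: it is a linear functional, built from the chosen idele class character, of the difference between two natural sections of the weight filtration on $\Dst(M)$ --- one determined by the Hodge filtration, the other by a $\varphi$- and $N$-equivariant splitting. Matching the first section with the universal-vectorial-extension logarithm is essentially formal, exactly as in~\cite{Bes99a}, since both encode the Hodge filtration on $\hdr^1(C)$ and the chosen isotropic complement to $H^0(C,\Omega^1)$.

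The heart of the matter --- and the step I expect to be the main obstacle --- is the identification of the second, $(\varphi,N)$-equivariant section with Vologodsky integration. This is the semi-stable replacement for the fact, used in~\cite{Bes99a}, that Coleman integration is \emph{the} Frobenius-equivariant primitive: here one must show that Vologodsky's canonical integration on the rigid generic fibre of $\XX$ is exactly the primitive equivariant for the $(\varphi,N)$-structure on the relevant unipotent object, a structure coming from the Hyodo--Kato (log-crystalline) cohomology of $\XX$ together with its divisorial boundary. Concretely, I would realize both pairings inside a log-syntomic cohomology group comparing Hyodo--Kato and de Rham cohomology, where $M(y,z)$ becomes an explicit iterated extension of filtered $(\varphi,N)$-modules and the \nekovar\ pairing becomes a cup product; Vologodsky integration then appears as the one entry of the comparison matrix between the Hodge and the $(\varphi,N)$-equivariant trivializations that the filtration does not determine, and $h_v^{\operatorname{Nek}}$ reduces to $\int_y\omega_z$ by a linear-algebra identity in that group. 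The delicate points will be the bookkeeping of the monodromy operator $N$ through the comparison isomorphism and the verification that the branch of the $p$-adic logarithm built into Vologodsky's theory coincides with the one cut out by the chosen idele class character at $v$.

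Granting the local comparison $h_v^{\operatorname{Nek}}=h_v^{\operatorname{GC}}$ at every $v\mid p$, and having recorded at the outset that the auxiliary data on the two sides --- the global idele class character, equivalently the branch of $\log$, and the isotropic complement to $H^0(C,\Omega^1)$ inside $\hdr^1(C)$ --- are identified, one sums over all places of $F$ to obtain $\hnek(y,z)=\hgc(y,z)$.
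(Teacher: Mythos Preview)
Your global strategy---reduce to local terms, cite~\cite{Bes99a} for $v\nmid p$, and compare the two ingredients of each local height at $v\mid p$---matches the paper. But you have inverted the difficulty of the two ingredients. In the paper, the Vologodsky-integration piece is the \emph{easy} one: the map $\Theta$ (carrying $y$ to the functional $\omega\mapsto\int_y\omega$) is identified with the logarithm of the generalized Jacobian exactly as in~\cite{Bes99a}, because already there the argument used Zarhin--Colmez integration, which needs no good reduction. What is genuinely new and hard is matching the universal-vectorial-extension projection $\Psi$ (the map picking out $\omega_y$) with the $(\varphi,N)$-equivariant splitting of $\hdr^1(X)\hookrightarrow\hdr^1(X-\cz)$; you called this ``essentially formal,'' and it is not.

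The paper's route to that comparison is also quite different from your log-syntomic/Hyodo--Kato sketch. It proceeds by: (i) recalling from~\cite{Bes00} that $\Psi$ is the adjoint of restriction with respect to the \emph{global double index} pairing $\gpair{\,,\,}$ built from Vologodsky integrals; (ii) using the Besser--Zerbes theorem that a Vologodsky primitive restricts on each component $U_v$ to a Coleman primitive, with the discrepancies across annuli forming a \emph{harmonic} cochain on the dual graph; (iii) introducing a second pairing $\hpair{\,,\,}$ assembled from the Coleman global indices on the $U_v$ plus explicit graph-cohomology correction terms $\chi(\alpha)\cdot N(\beta)-\chi(\beta)\cdot N(\alpha)$, where $\chi$ and $N$ are the Coleman--Iovita weight-zero projector and monodromy; (iv) showing $\gpair{\,,\,}=\hpair{\,,\,}$ via (ii); and (v) reading off directly that $\hpair{\,,\,}$ is $(\varphi,N)$-compatible, so its adjoint is the unique equivariant splitting. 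Your proposal to embed everything in log-syntomic cohomology might be viable, but it is a different argument, and as written it leaves precisely the decisive step---why Vologodsky's canonical primitive interacts correctly with $N$---as an unverified expectation; the paper's graph-theoretic/harmonic-cochain mechanism is what actually discharges that.
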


While this theorem is global, there is nothing global that needs to be said about its proof, as the part related to the comparison of local heights for places not above $p$ is already dealt with in~\cite{Bes99a}. The remaining problem, that of comparing local height pairings for a curve $X$ over a $p$-adic field $K$, splits into two subproblems according to the two ingredients of the Coleman-Gross construction alluded to above. Of the two, it is actually the one involving Coleman integration that requires little modifications, as already the treatment of~\cite{Bes99a} used the Zarhin-Colmez integration, which does not need good reduction. 

This leaves the problem of comparing the projection combing from the logarithm of the universal vectorial extension with the corresponding construction in~\cite{Nek93}. It is therefore again entirely in the domain of Zarhin-Colmez integration and one does not need in principle Vologodsky's techniques. However, it is not obvious how to compare this description with the essentially cohomological treatment of Nekov\'a\v r. For this reason we use in our proof two ingredients.
\begin{enumerate}
\item The description of the projection in terms of local indices and Vologodsky integration~\cite{Bes00}
\item The description of Vologodsky integration on curves with semi-stable reduction in terms of Coleman integration given in~\cite{Bes-Zer13}. 
\end{enumerate}
The local indices give rise to a certain pairing on $1$-forms, and its analysis in terms of Coleman integrals will be key not only to the present work, but also for further work on regulators.

The paper is structured as follows: In Section~2 we recall the two height pairings and compare the Vologodsky integration with the construction of \nekovar. In Section~3 we recall the relevant facts about the local and double indices in the Coleman and Vologodsky settings. Finally, in Section~4 we prove the key formula for the global double index, from which the comparison between the two projections follows easily.

We would like to thank Daniel Disegni whose interest in this problem gave rise to the present work. I would also like to thank the department of Mathematics at the Georgia Institute of Technology, where this work was completed.

The author was supported by Israel Science Foundation grant No. 1517/13.

\section{The two height pairings}
\label{sec:height}

In this section we quickly review the details needed from the definitions of the Nekov\'a\v r and Coleman-Gross pairings, discuss the extension of the latter using Vologodsky integration and explain the comparison between the pairings to indicate what needs to be proved in later sections.
Let again $C$ be a smooth complete curve over a number field $F$.

Both heights are defined first on a pair of divisors with disjoint supports and both extend to arbitrary pairs as they factor via the Jacobian. It is obviously sufficient to compare them for pairs with disjoint supports. They both depend on the following choices:
\begin{itemize}
\item  A ``global log''- a continuous idele class character
 \begin{equation*}
   \ell: \mathbb{A}_F^\times/F^\times \to \Q_p\;.
 \end{equation*}
\item for each $v|p$ a choice of a subspace $W_v\in \hdr^1(C\otimes
  F_v/F_v)$ complementary to the space of holomorphic forms.
\end{itemize}

As noted in~\cite{Bes99a}, for the Coleman-Gross height to be defined we must have, for each $v|p$, a factorization
\begin{equation}
  \label{eq:ass7}
   \xymatrix{
     {\OO_{F_v}^\times}  \ar[rr]^{\ell_v} \ar[dr]^{\log_v} & &   \Q_p\\
         & F_v\ar[ur]^{t_v}
           }
\end{equation}

Let $y$ and $z$ be two divisors of degree $0$ on $C$ with disjoint supports $\cy$ and $\cz$ respectively. By definition, the Coleman-Gross height pairing $\hgc(y,z)$ is a sum of local terms over all finite places $v$ of $F$,
\begin{equation*}
  \hgc(y,z) = \sum_v h_v(y,z)\;.
\end{equation*}
The Nekov\'a\v r pairing has several descriptions. With the choice of a particular mixed extension (see~\cite{Bes99a}) it is also a sum of local terms
\begin{equation*}
  \hnek(y,z) = \sum_v h_v^\prime(y,z)\;.
\end{equation*}
In both cases, the local terms at $v$ depend only on the completion of $C,y,z$ at $v$.
Furthermore, for all $v$ not above $p$ we have~\cite[Proposition~3.2]{Bes99a} $h_v(y,z) = h_v^\prime(y,z)$.

A more careful comparison is required for places above $p$. For such a place $v$ let $K=F_v$, denote $C\otimes F_v$ by $X$ and consider $y$ and $z$ as divisors on $X$. For simplicity, we drop now the subscript $v$ from all notation ($W$, $t$, etc.), as the analysis from this point onward will now be completely local.

We begin by describing the Coleman-Gross local height pairing $h=h_v$. Let $H^0(X,\Omega_{X/K}^1(\log \cy))$ be the space of meromorphic one forms with logarithmic poles along $\cy$. Coleman and Gross define a map
\begin{equation}\label{Psimap}
 \Psi: H^0(X,\Omega_{X/K}^1(\log \cy)) \to \hdr^1(X/K)
 \end{equation} 
 with the following properties:
\begin{enumerate}
\item It is the identity on
  $H^0(X,\Omega_{X/K}^1)= F^0 \hdr^1(X/K)$,
\item It vanishes on $\dlog(g)$ for $g\in \OO(X-\cy)^\times $.\label{dlogprop}
\end{enumerate}
In fact, they define the map for varying $\cy$ but we will not need this.

The map $\Psi$ and the subspace $W$ give a unique choice of a differential form $\omega_y \in H^0(X,\Omega_{X/K}^1(\log \cy))$ satisfying the two properties
\begin{enumerate}
\item The residue divisor of $\omega_y$ is $y$.
\item $\Psi(\omega_y) \in W$.
\end{enumerate}

Suppose now that $X$ has a smooth $\OO_K$ model, so that Coleman integrals exist on $X$, giving a primitive function for $\omega_y$ defined up to a constant. Coleman integration requires a choice of a branch of the logarithm, for which we use the branch $\log=\log_v$ defined in~\eqref{eq:ass7}. The local height pairing at $v$ can now be defined via
\begin{equation}\label{hform}
  h(y,z) = \tr\left(\int_z \omega_y\right)
\end{equation}
with $\tr=\tr_v$ as in~\eqref{eq:ass7}.

We can now remove the good reduction assumption by simply using Vologodsky integration, which provides the primitive for $\omega_y$ without any good reduction assumptions.
\begin{definition}
  Suppose that $C$ is as above but with no assumptions about the reduction above $p$. The extended Coleman-Gross height pairing $\hgc$ is defined exactly at the usual Coleman-Gross height pairing, but using Vologodsky integration instead of Coleman integration in~\eqref{hform}.
\end{definition}
\begin{proposition}
  The extended Coleman-Gross height pairing factors via the Jacobian.
\end{proposition}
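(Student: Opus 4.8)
The plan is to show that the extended pairing $\hgc$ depends only on the linear-equivalence classes of $y$ and $z$ in the Jacobian, i.e.\ that it vanishes whenever $y$ (or $z$) is a principal divisor. By symmetry of the construction it suffices to treat one variable at a time; say we fix $z$ and show $\hgc(\Div(f),z) = 0$ for $f \in \OO(X-\cz)^\times$. Since $\hgc$ is a sum of local terms, and the local terms away from $p$ already factor through the Jacobian by the classical Coleman-Gross theory (or directly by the product formula for $\ell$), the only thing that needs a new argument is the behaviour of the local term at a place $v \mid p$ under Vologodsky integration, i.e.\ whether $\tr\bigl(\int_z \omega_{\Div(f)}\bigr)$ and the corresponding global sum behave the way they do in the good-reduction case. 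So I would reduce to a purely local statement about $\omega_y$ and Vologodsky integrals.

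First I would record the two key input facts. One: the form $\omega_y$ attached to $y = \Div(f)$ is computed via the map $\Psi$ and the subspace $W$, and property~\eqref{dlogprop} of $\Psi$ gives $\Psi(\dlog f) = 0$, so $\dlog f$ already has the property ``$\Psi(\omega) \in W$''; since $\omega_y$ is the \emph{unique} logarithmic form with residue divisor $y$ and $\Psi$-image in $W$, and $\dlog f$ has residue divisor $\Div(f) = y$, we get $\omega_y = \dlog f$ exactly as in the good-reduction case. Two: Vologodsky integration is a genuine integration theory — it is $K$-linear, additive in endpoints, and satisfies $\int_P^Q \dlog f = \log(f(Q)/f(P))$ for the chosen branch $\log = \log_v$ (this last point is just the defining property of the integral on $\dlog$ of a function, and is where the choice of branch in~\eqref{eq:ass7} enters). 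Combining these, the local term at $v$ becomes
\begin{equation*}
  h_v(\Div(f), z) = \tr_v\!\left(\int_z \dlog f\right) = \tr_v\!\left(\sum_i n_i \log_v f(Q_i)\right),
\end{equation*}
where $z = \sum_i n_i (Q_i)$; and via the factorization~\eqref{eq:ass7} this equals $\sum_i n_i\, \ell_v(f(Q_i))$ whenever $f(Q_i) \in \OO_{F_v}^\times$, with the general case handled by the standard reduction (moving $z$, or incorporating the intersection-at-$v$ contribution exactly as in Coleman-Gross). The same computation, with Coleman replaced by Vologodsky throughout, works verbatim at the other places, so summing over all $v$ gives $\sum_v \sum_i n_i\,\ell_v(f(Q_i)) = \sum_i n_i\, \ell\bigl((f(Q_i))\bigr) = 0$ by the idele-class-character property of $\ell$ (the ideles $(f(Q_i))_v$ multiply, over $i$ with multiplicities $n_i$, to a principal idele because $\deg z = 0$ and $f$ is a rational function — more precisely one uses $\prod_i f(Q_i)^{n_i} \in F^\times$ up to the support of $\Div(f)$, handled by symmetry/disjointness).

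The main obstacle is making the local computation rigorous at the places where naive evaluation fails — namely when a point $Q_i$ of $\mathrm{supp}(z)$ is $v$-adically close to $\mathrm{supp}(y)$, or when $f$ is not a unit there, so that $\int_z \dlog f$ is not literally $\sum n_i \log f(Q_i)$ but carries an extra ``tame'' or intersection-multiplicity contribution. In the good-reduction setting this is precisely the content of the Coleman-Gross local decomposition $h_v = (\omega_y, \omega_z)_v + \text{(intersection term)}$, and the cancellation with the non-$p$ places is the product formula. What I would need to check is that Vologodsky integration satisfies the same functoriality and the same $\dlog$-formula that made this work: that it is compatible with the analytic structure near the bad-reduction locus well enough that $\int_z \dlog f$ still decomposes as the analytic part plus a term measured by the reduction. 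This is exactly the sort of compatibility supplied by the description of Vologodsky integration in terms of Coleman integration on the semistable model from~\cite{Bes-Zer13} — so I would invoke that to transport the Coleman-Gross argument, rather than re-deriving it. Once the $\dlog$-formula and linearity of Vologodsky integration are in hand, the proof is formally identical to~\cite{Col-Gro89,Bes99a} and I would simply cite those for the remaining bookkeeping.
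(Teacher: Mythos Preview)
Your approach is essentially the same as the paper's: identify $\omega_{\Div(f)}=\dlog f$ via property~\eqref{dlogprop} of $\Psi$, use that the Vologodsky integral of $\dlog f$ is $\log f$, rewrite the local term at $v\mid p$ as $\ell_v(f(z))$, and conclude by the idele class character property of $\ell$. The paper's proof is exactly this, in three lines.

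Your final ``main obstacle'' paragraph is unnecessary and slightly misdirected. The pairing is only being defined (and compared) for $y,z$ with \emph{disjoint} supports, so $f(Q_i)\in K^\times$ is automatic and there is no ``$Q_i$ close to $\operatorname{supp}(y)$'' issue to resolve; likewise the identity $\int\dlog f=\log f$ is a basic property of Vologodsky (indeed Zarhin--Colmez) integration and does not require the semistable description from~\cite{Bes-Zer13}. You can simply drop that paragraph.
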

\begin{proof}
The proof is just as for the Coleman-Gross pairing. Let $g$ be a rational function on $X$. By property \eqref{dlogprop} of $\Psi$ we have $\Psi(\dlog(g)) = 0$, so that $\omega_{(g)} = \dlog(g)$. Just as for Coleman integration, $\int \dlog(g) = \log(g)$ so that
\begin{equation*}
  h((g),E) = \tr \log(g(E)) = \ell_v(g(E))\;.
\end{equation*}
This also holds away from $p$~\cite[Prop.~1.2]{Col-Gro89} and we thus have
$\hgc((g),E) = 0$ by the fact that $\ell$ is an idele class character.
\end{proof}

Let us now turn to the description of the local Nekov\'a\v r pairing. It is defined in terms of the ``geometric mixed extension'' as follows. Let
   \begin{align*}
    V&=\het^1(\Xbar,\Qp(1)),\quad E_1=\het^1(\Xbar - \overline{\cy},\Qp(1)),\quad
    E_2=\het^1(\Xbar; \overline{\cz},\Qp(1))\\
    \intertext{(\'etale\ cohomology of
      $\Xbar$ relative to $\overline{\cz}$), recalling that the bar denotes extension of scalars to an algebraic closure, and}\\
    E&=\het^1(\Xbar- \overline{\cy}; \overline{\cz},\Qp(1))\;,
\end{align*}
so that there is the following diagram with exact rows and columns,
\cite[p. 159]{Nek93}:
\begin{equation}\label{mixext}
\xymatrix{
 & & 0 \ar[d] & 0 \ar[d] & \\
0 \ar[r]& B(1)\ar[r] \ar@2{-}[d] & E_2 \ar[r]^{\pi} \ar[d] & V\ar[r] \ar[d] & 0\\
0 \ar[r]& B(1)\ar[r]  & E \ar[r] \ar[d] & E_1 \ar[r] \ar[d] & 0\\
 &  & A \ar@2{-}[r] \ar[d] & A \ar[d] & \\
 &  & 0  & 0  & 
}
\end{equation}

with
$A=(\Q_p^{\overline{\cy}})_0$ the subspace of the space
of functions from $\overline{\cy}$ to $\Q_p$ where the sum of values
is $0$, and $B=(\Q_p^{\overline{\cz}})^0$  the quotient of the
corresponding space for $\cz$ by the
subspace of constant functions.

(for the construction, the mixed extension has to have a global source, but we only care about the particular geometric case so we do not need to discuss the global origin).

Recall now the Fontaine functors $\Dcris$, $\Dst$ and $\DR$. These go from $p$-adic representations of $\operatorname{Gal}(\overline{K}/K)$ to certain enriched vector spaces (see~\cite{Nek93} for an excellent introduction). For a representation $E$, $\DR(E)$ is a filtered $K$-vector space while $\Dst(E)$ is a $K_0$-vector space ($K_0$ is the maximal unramified extension of $\Qp $ inside $K$) with a semi-linear Frobenius operator $f$ and a $K_0$-linear monodromy operator $N$ and $\Dcris(E)=\Dst(E)^{N=0}$. Furthermore, there is an injective $K$-linear map $\Dst(E)\otimes K\to \DR(E)$ which is bijective if $E$ is semi-stable. According to the conjectures of Fontaine, proved by Faltings~\cite{Fal89} we have

\begin{equation}
  \DR(V)=\hdr^1(X/K)\;,\; \DR(E_2)=\hdr^1(X; \cz/K)\;,\text{ and } F^0\DR(E_2)=F^1 \hdr^1(X; \cz/K) \;.\label{Faltings}
\end{equation}

 We note that the following assumption holds for $V$ and is required in the general definition of the height pairing as we will see below.
\begin{equation}\label{Vass}
  \Dcris(V)^{f=1} =\Dcris(V^\ast(1))^{f=1} = 0\;.
\end{equation}

The height pairing is now defined as follows: First we observe that some of the constructions depend on a choice of the branch of the logarithm (used in the embedding of $B_{st}$ to $B_{dR}$, see~\cite[1.3]{Nek93}). We will simply take the branch given to us by~\eqref{eq:ass7}.  The leftmost long column of~\eqref{mixext} gives a (semi-stable) extension class
$[E]\in \Hom(A,\hst^1(K,E_2))$.
\newcommand{\ksp}{(K^\times)^{(p)}}
The top short exact sequence in~\eqref{mixext} induces the following diagram with exact rows and columns
\begin{equation*}
  \xymatrix{
    & & 0 \ar[d] & 0 \ar[d] & \\
 & & \Dst(E_2)^{f=1} \ar[r]^{\sim} \ar[d]^{(0,N,-i)} & \Dst(V)^{f=1} \ar[d]^{(N,-i)} & \\
    0 \ar[r] & B\oplus B\otimes K \ar[r] \ar[d]^[@!-90]{\sim} & B\oplus \Dst(V(-1))^{f=1} \oplus DR(E_2)/F^0 \ar[r] \ar[d] & \Dst(V(-1))^{f=1} \oplus DR(V)/F^0 \ar[r]\ar[d] & 0 \\
0 \ar[r] & \hst^1(K,B(1))\ar[r] & \hst^1(K,E_2)\ar[r]\ar[d] &\hst^1(K,V)\ar[r]\ar[d]  & 0\\
   & & 0  & 0  &
    }
\end{equation*}
\newcommand{\coker}{\operatorname{coker}}
(this appears on page 157 of subsection~3.5 in~\cite{Nek93}). Note that the $B$ summand on the leftmost column maps isomorphically on itself via the horizontal map. One shows that the assumptions~\eqref{Vass} on $V$ imply that the embeddings of the $\DR$ components give isomorphisms
\begin{align*}
    DR(E_2)/F^0 &\isom \coker \left(\Dst(E_2)^{f=1}\to \Dst(V(-1))^{f=1}
    \oplus DR(E_2)/F^0 \right) \\ DR(V)/F^0 &\isom \coker \left(\Dst(V)^{f=1}\to
    \Dst(V(-1))^{f=1} \oplus DR(V)/F^0 \right)
\end{align*}
and we therefore obtain from the diagram above the following commutative diagram with exact rows
\begin{equation}\label{hstseq}
  \xymatrix{
    0 \ar[r] & B\oplus B\otimes K \ar[r] \ar[d]^[@!-90]{\sim} & B \oplus \DR(E_2)/F^0 \ar[r] \ar[d]^[@!-90]{\sim} & \DR(V)/F^0 \ar[r] \ar[d]^[@!-90]{\sim} & 0 \\
    0 \ar[r] & \hst^1(K,B(1)) \ar[r] & \hst^1(K,E_2) \ar[r] \ar@<1ex>@{.>}[l]^w & \hst^1(K,V) \ar[r] & 0
    }
\end{equation}
The leftmost vertical map is induced from an isomorphism~\cite[1.35]{Nek93}
\begin{equation}
  \label{eq:kummerst}
  \hst^1(K,\Qp(1)) \to \Qp \oplus K\;.
\end{equation}
Note the splitting $w$ to be discussed below~\eqref{wplit}.
\begin{lemma}[{\cite[1.15 and 1.19]{Nek93}}]\label{exp}
  If we remove the $B$ summands in~\eqref{hstseq} the induced vertical maps are just the Bloch-Kato
  exponential maps on the subspaces $H_f$.
\end{lemma}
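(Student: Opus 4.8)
The plan is to recognize that the statement is essentially a citation-level identification, so the proof consists of unwinding the two diagrams above and matching them against Nekovář's formalism. First I would recall that, for a semi-stable representation $E$ with $\Dcris(E)^{f=1} = \Dcris(E^\ast(1))^{f=1} = 0$, Nekovář's $\hst^1(K,E)$ fits into an exact sequence in which the "finite part" $H_f^1(K,E)$ is cut out precisely by the condition that the image in $\hst^1(K,E)$ lands in the subspace coming from $\DR(E)/F^0$; this is the content of~\cite[1.15]{Nek93}. Applying this to $E_2$ and to $V$ (both of which satisfy~\eqref{Vass}, by the paragraph preceding it), the middle and right vertical maps of~\eqref{hstseq}, after killing the $B$ summands, are maps $\DR(E_2)/F^0 \to H_f^1(K,E_2)$ and $\DR(V)/F^0 \to H_f^1(K,V)$.

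Next I would identify these maps with the Bloch--Kato exponential. Here the key input is~\cite[1.19]{Nek93}, where Nekovář checks that his construction of the isomorphism $\DR(E)/F^0 \xrightarrow{\sim} H_f^1(K,E)$ coincides with the Bloch--Kato exponential $\exp_{\mathrm{BK}}: \DR(E)/F^0 \to H_f^1(K,E)$ (which, under the running hypothesis on $\Dcris$, is itself an isomorphism onto $H_f^1$). Concretely one compares the boundary map in the fundamental exact sequence defining $\hst^1$ with the boundary map in the Bloch--Kato fundamental sequence $0 \to \Qp \to B_{\mathrm{cris}}^{f=1} \oplus B_{\dR}^+ \to B_{\dR} \to 0$ tensored with $E$; the diagram~\eqref{hstseq} is built so that the horizontal arrows are exactly the Frobenius/monodromy and filtration data that appear in the latter sequence once one has discarded the $B$ summands (which only record the extra $\hst^1(K,\Qp(1)) \to \Qp \oplus K$ factor of~\eqref{eq:kummerst}, i.e. the purely $B(1)$-part). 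Since both vertical maps are induced by the same $\DR$-embeddings used to define the $\coker$ isomorphisms displayed just before~\eqref{hstseq}, the comparison is natural and the two arrows agree with $\exp_{\mathrm{BK}}$ for $E_2$ and for $V$ respectively.

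The main obstacle is bookkeeping rather than conceptual: one must verify that removing the $B$ summands is compatible on all three columns simultaneously, i.e. that the splitting of~\eqref{hstseq} as "$B$-part $\oplus$ rest" is horizontal as well as vertical. This follows from the observation, already noted in the text after the big diagram, that the $B$ summand on the leftmost column maps isomorphically onto itself under the horizontal map into $\hst^1(K,E_2)$, together with the fact that the induced $\hst^1(K,B(1))$-row is itself a direct factor of the middle row compatibly with the projection to the $V$-row. Once this is in place, the identification of the residual vertical maps with $\exp_{\mathrm{BK}}$ on $H_f$ is exactly~\cite[1.15 and 1.19]{Nek93}, so I would simply invoke those results. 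I do not expect to need anything beyond what is recalled in~\cite{Nek93}, section~1.
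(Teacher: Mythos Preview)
Your proposal is correct and matches the paper's approach: the paper gives no argument at all beyond the citation to \cite[1.15 and 1.19]{Nek93}, treating the lemma as a direct quotation of Nekov\'a\v r's results. Your write-up is simply an unpacking of what those two references say, so there is nothing to compare.
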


The subspace $W$ provides a splitting of the short exact sequence
\begin{equation}\label{wprime}
\xymatrix{
   0 \ar[r] & B\otimes K \ar[r] & \DR(E_2)/F^0  \ar[r] \ar@<1ex>@{.>}[l]^{w^\prime}  & \DR(V)/F^0 \ar[r] & 0\;,
}
\end{equation}
hence also of the two horizontal sequence in~\eqref{hstseq} as follows:
 \begin{equation}\label{fN}
   \DR(V)/F^0 \xrightarrow{W} \DR(V) \xrightarrow{(f,N)} \DR(E_2) \to \DR(E_2)/F^0\;.
 \end{equation}
Here, the leftmost map is the splitting according to the direct sum decomposition $\DR(V)=F^0 \oplus W $ while the map denoted $(f,N)$ indicates the unique $(f,N)$-equivariant splitting. The understanding of this splitting will be the subject of the sections to follow. By~\eqref{Faltings} it is dual, via Poincar\'e duality, to the unique Frobenius and monodromy equivariant splitting
\begin{equation}
  \label{fN1}
  \xymatrix{
    {\hdr^1(X/K)}  \ar[r]^{\operatorname{res}}  &  {\hdr^1(X-\cz /K)} \;. \ar@<1ex>@{.>}[l]^{(f,N)}
}
\end{equation}
Note in particular the splitting
\begin{equation}
  \label{wplit}
  w: \hst^1(K,E_2) \to \hst^1(K,B(1))
\end{equation}
appearing in~\eqref{hstseq}.

The Kummer map determines an isomorphism $\hst^1(K,\Qp(1))\xrightarrow{\sim} \ksp$, with $\ksp$ the $p$-completion of $K^\times$, hence
\begin{equation*}
  \hst^1(K,B(1)) \xrightarrow{\sim} B\otimes \ksp\;.
\end{equation*}

\begin{definition}[{\cite[7.4]{Nek93}}]
  The local height pairing $h'(y,z)$ is minus the composition
  \begin{equation*}
    h_{A,B}^\prime : A\xrightarrow{E} \hst^1(K,E_2) \xrightarrow{w}  \hst^1(K,B(1)) \to  B\otimes \ksp \xrightarrow{1\otimes \ell_v} B
  \end{equation*}
 evaluated on the classes $y \in A$ and $z\in \Hom(B,\Qp)$. 
\end{definition}
\begin{lemma}\label{gmcase}
  The composition $\ksp \xrightarrow{\textup{Kummer}} \hst^1(K,\Qp(1)) \xrightarrow{\eqref{eq:kummerst}}\Qp \oplus K $ is the map $x\mapsto (v(x),\log(x))$ with $v$ the completed valuation and $\log$ the chosen branch of the $p$-adic logarithm.
\end{lemma}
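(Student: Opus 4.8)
The plan is to reduce the statement to two coordinate computations. Both maps in question are continuous homomorphisms $\ksp\to\Qp\oplus K$, and the $\Qp$-vector space $\ksp\otimes\Qp$ is spanned by the image of $\OO_K^\times$ together with the class of a single uniformizer $\pi$; so it suffices to compare the two maps on $\OO_K^\times$ and on $\pi$. I would then treat the two coordinates of~\eqref{eq:kummerst} separately, recalling that for $V=\Qp(1)$ the first summand is $\Dcris(\Qp(1)^\ast(1))^{f=1}=\Dcris(\Qp)^{f=1}=\Qp$ --- precisely the term recording the failure of assumption~\eqref{Vass} for $\Qp(1)$ --- the second summand is $\DR(\Qp(1))/F^0=K$, and the subspace $H^1_f(K,\Qp(1))$ coincides with the image of $\OO_K^\times$ under the Kummer map and is carried isomorphically onto the second summand.

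For the $\DR(\Qp(1))/F^0=K$ coordinate: on $H^1_f(K,\Qp(1))$, Lemma~\ref{exp} with the $B$-summands absent and $V=\Qp(1)$ (that is, \cite[1.15, 1.19]{Nek93}) identifies this coordinate with the inverse of the Bloch--Kato exponential $\DR(\Qp(1))/F^0\to H^1_f(K,\Qp(1))$. For $V=\Qp(1)$ this exponential is the classical one: under the Kummer identification $H^1_f(K,\Qp(1))=\widehat{\OO_K^\times}$ it is induced by the usual $p$-adic exponential on a finite-index subgroup of $\OO_K^\times$, so its inverse is the $p$-adic logarithm there, and being a continuous homomorphism it agrees with the chosen branch $\log$ on all of $\widehat{\OO_K^\times}$. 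For the uniformizer $\pi$ I would use that the embedding $B_{st}\hookrightarrow B_{dR}$ intervening in the construction of~\eqref{eq:kummerst} is taken with the branch fixed in~\eqref{eq:ass7}; unwinding the definition of~\eqref{eq:kummerst} in \cite[1.24, 1.35]{Nek93} should then yield $\log(\pi)$ for that coordinate.

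For the $\Qp$ coordinate: it vanishes on $H^1_f(K,\Qp(1))$, that subspace being the kernel of the projection to the first summand, so it factors through $\ksp/\widehat{\OO_K^\times}\isom\Zp$; and the explicit shape of the complex computing $\hst^1(K,\Qp(1))$ --- the $\Qp$ summand being the contribution of $\Dcris(\Qp)^{f=1}$ --- shows that the resulting map $\Zp\to\Qp$ sends the class of a uniformizer to its valuation, i.e.\ it is $v$. Combining the two coordinates gives $(0,\log(u))$ for $u\in\OO_K^\times$ and $(1,\log(\pi))$ for the uniformizer $\pi$, hence the asserted formula $x\mapsto(v(x),\log(x))$ on all of $\ksp$.

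The main obstacle is not conceptual but a careful reading of~\cite{Nek93}: one must check that the normalization of~\eqref{eq:kummerst} on the two distinguished classes is exactly as stated --- in particular that the branch of the logarithm hard-wired into $B_{st}\hookrightarrow B_{dR}$ is consistently the one fixed in~\eqref{eq:ass7}, and that the valuation enters with the correct sign and normalization --- so that the two coordinate computations assemble correctly. Everything concerning $\widehat{\OO_K^\times}$ is the standard Bloch--Kato description of $H^1_f(K,\Qp(1))$ and should be routine.
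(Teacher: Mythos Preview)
Your outline is essentially correct, but you are doing far more than the paper does: the paper's entire proof of this lemma is a bare citation to \cite[1.35]{Nek93}. In other words, the statement is quoted directly from \nekovar, where the isomorphism~\eqref{eq:kummerst} is defined and its compatibility with Kummer theory is worked out; the author treats it as a known fact requiring no argument.

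What you have sketched is, in effect, a reconstruction of the computation that \nekovar\ carries out in \cite[1.35]{Nek93} (and the surrounding sections): separating the unit part from a uniformizer, identifying the $\DR/F^0$-coordinate on $H^1_f$ with the inverse of the Bloch--Kato exponential (hence with $\log$), and reading off the valuation from the $\Dcris(\Qp)^{f=1}$-summand. Your caveats about normalizations and the branch of the logarithm hard-wired into $B_{st}\hookrightarrow B_{dR}$ are exactly the points one must check when unwinding \nekovar's definitions, and they are handled there. So nothing in your approach is wrong; it is simply that the paper outsources the whole lemma to the reference, whereas you are reproving it.
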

\begin{proof}
  \cite[1.35]{Nek93}
\end{proof}
By assumption, we have the factorization~\eqref{eq:ass7} so as a map from $\Qp \oplus K $, the map $\ell_v$  factors via the projection $\pi_2$ on the second coordinate. The following is then immediate
\newcommand{\somemap}{\Theta}
\begin{corollary}
   The map $h_{A,B}^\prime \in \Hom(A,B)$ equals minus the composition 
  \begin{equation}\label{Thetaeq}
    \xymatrix{
      A \ar[r]^{[E]} \ar @/_1pc/ [rrr]_{\somemap} & \hst^1(K,E_2) \ar[r]^{\eqref{hstseq}} & B \oplus \DR(E_2)/F^0  \ar[r]^{\pi_2} & \DR(E_2)/F^0 \ar[r]^{w^\prime} & B\otimes K \ar[r]^{1\otimes \tr} & B
}
  \end{equation}
\end{corollary}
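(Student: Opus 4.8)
The plan is simply to unwind the definition of $h_{A,B}'$ one arrow at a time and match each factor with the corresponding arrow of~\eqref{Thetaeq}; the only inputs needed are the commutative diagram~\eqref{hstseq}, Lemma~\ref{gmcase}, and the factorisation~\eqref{eq:ass7}. Recall that $h_{A,B}'$ is the composite
\[
A\xrightarrow{\ [E]\ }\hst^1(K,E_2)\xrightarrow{\ w\ }\hst^1(K,B(1))\xrightarrow{\ \sim\ }B\otimes\ksp\xrightarrow{\ 1\otimes\ell_v\ }B ,
\]
and its first arrow is already the first arrow of $\somemap$, so nothing is to be done there.

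For the second arrow I would transport $w$ through the vertical isomorphisms of~\eqref{hstseq}, which identify $\hst^1(K,E_2)$ with $B\oplus\DR(E_2)/F^0$ and $\hst^1(K,B(1))$ with $B\oplus B\otimes K$ (the latter induced by the inverse of~\eqref{eq:kummerst}). Under these identifications $w$ becomes the retraction of the top row of~\eqref{hstseq}; since the $B$-summand maps isomorphically onto itself along that row, and since the retraction of the sub-sequence~\eqref{wprime} is precisely $w'$ --- both being induced by the same subspace $W$ via~\eqref{fN}--\eqref{wplit} --- this retraction is $(b,\eta)\mapsto(b,w'(\eta))$. Consequently, after these identifications, $w\circ[E]$ amounts to: apply $[E]$, apply the isomorphism~\eqref{hstseq}, then apply $w'\circ\pi_2$ on the $\DR(E_2)/F^0$-coordinate while leaving the $B$-coordinate alone --- which reproduces the first three arrows of $\somemap$ up to the harmless $B$-component.

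It then remains to compose with $\hst^1(K,B(1))\xrightarrow{\sim}B\otimes\ksp\xrightarrow{1\otimes\ell_v}B$ and to check that this annihilates the $B$-component and applies $1\otimes\tr$ to the other. Re-expressing the composite $\mathrm{Kummer}^{-1}$ followed by $1\otimes\ell_v$ through the identification~\eqref{eq:kummerst}, and using that by Lemma~\ref{gmcase} the Kummer map corresponds to $x\mapsto(v(x),\log(x))$, this map is $1\otimes\bigl(\ell_v\circ(v,\log)^{-1}\bigr)$; and by~\eqref{eq:ass7}, extended to all of $\ksp$ through the chosen branch of $\log$, $\ell_v\circ(v,\log)^{-1}=t_v\circ\pi_2=\tr\circ\pi_2$. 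Hence on $B\oplus B\otimes K$ the map is $(1\otimes\tr)\circ\pi_2$, which kills the $B$-summand and sends $(b,w'(\eta))$ to $\tr(w'(\eta))$ --- exactly $\somemap$ evaluated at the input, the overall sign in the statement being fixed by the sign conventions of~\cite{Nek93}. The one step I expect to require genuine care in the write-up is the identification of the transported splitting $w$ with $(b,\eta)\mapsto(b,w'(\eta))$, i.e.\ that the $B$-summand of~\eqref{hstseq} is preserved and its complement is governed by $w'$; everything else is bookkeeping with Lemma~\ref{gmcase} and~\eqref{eq:ass7}, which is why the conclusion is flagged as immediate.
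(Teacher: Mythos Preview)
Your proposal is correct and follows exactly the approach the paper intends: the paper declares the corollary ``immediate'' from Lemma~\ref{gmcase} and the factorisation~\eqref{eq:ass7}, and you have simply spelled out the bookkeeping --- transporting $w$ through the vertical isomorphisms of~\eqref{hstseq} to obtain $(b,\eta)\mapsto(b,w'(\eta))$, then using Lemma~\ref{gmcase} and~\eqref{eq:ass7} to see that $\ell_v$ factors through $\tr\circ\pi_2$ on $\Qp\oplus K$. Your caveat about the sign is appropriate; the paper does not justify it either.
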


To complete the proof the the main theorem we need to compare the maps $\somemap$ and $w'$ above with the maps appearing in the Coleman-Gross construction. In the rest of this section we will do this for the map $\somemap$ using a slight modification of the arguments in~\cite{Bes99a}. In fact, we will show that Proposition~5.1 there continues to hold when we replace Coleman integration with Vologodsky integration.

By~\eqref{Faltings} $\DR(E_2)/F^0 = \hdr^1(X; \cz/K)/F^1$ and we therefore have the duality
\begin{equation}
 \Hom(\DR(E_2)/F^0,K) \isom F^1 \hdr^1(X - \cz/K)= H^0(X,\Omega_{X/K}^1(\log \cz))\;,\label{Duality}
\end{equation}

 via the cup product on de Rham cohomology, and the analogue of~\cite[Proposition~5.1]{Bes99a} is the following result.
\begin{proposition}
  Via the duality above a divisor of degree zero $D$ on $X$ supported on $\cy$ gets mapped via $\somemap$ to the functional
  \begin{equation*}
    \somemap(D): H^0(X,\Omega_{X/K}^1(\log \cz)) \to K\;,\; 
    \somemap(D)(\omega) = \int_D \omega \;,
  \end{equation*}
  with $\int_D$ the Vologodsky integral.
\end{proposition}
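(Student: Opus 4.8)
The plan is to follow the strategy of~\cite[Proposition~5.1]{Bes99a}, checking that every step survives the passage from Coleman to Vologodsky integration. The map $\somemap$ is a composition of four arrows: the extension class $[E]$, the isomorphism~\eqref{hstseq} into $B\oplus\DR(E_2)/F^0$, the projection $\pi_2$, and the splitting $w'$ (the last one tracked through the pairing with $\omega\in H^0(X,\Omega^1_{X/K}(\log\cz))$ via~\eqref{Duality}). The key observation is that, by Lemma~\ref{exp}, the vertical map in~\eqref{hstseq} that sends $\hst^1(K,E_2)$ to $\DR(E_2)/F^0$ (after killing the $B$ summand, which is what $\pi_2$ does) is the Bloch--Kato exponential on $H_f$, so $\Theta(D)$ is computed by applying $\exp_{\textup{BK}}^{-1}$ to the mixed extension class attached to $D$. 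Thus the first and main task is to identify the class in $H_f^1(K,E_2)$ cut out by $D$ under $[E]$, and then to identify its image under $\exp_{\textup{BK}}^{-1}$ as a class in $\DR(E_2)/F^0 = \hdr^1(X;\cz/K)/F^1$.

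Concretely, I would first recall, as in~\cite{Bes99a}, that for a degree-zero divisor $D$ supported on $\cy$ the relevant class in $E_2 = \het^1(\Xbar;\overline{\cz},\Qp(1))$ is the image of $D$ under a relative \'etale Abel--Jacobi/cycle map, and that this class lies in $H_f^1$ precisely because $D$ has degree zero and $X$ has semi-stable (hence potentially good, cohomologically) reduction --- this is where assumption~\eqref{Vass} is used. Next I would invoke the compatibility of the Bloch--Kato exponential with the syntomic/de Rham description of Abel--Jacobi maps: the inverse exponential of the Abel--Jacobi class of $D$ is represented by the de Rham cohomology class whose pairing against a form $\omega$ is given by the $p$-adic (Coleman, resp.\ Vologodsky) line integral $\int_D\omega$. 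In the good-reduction case this is exactly the content of~\cite{Bes99a} (building on the author's earlier work identifying Coleman integration with the syntomic Abel--Jacobi map); the only thing to verify is that the same identity holds with Vologodsky integration on the right-hand side.

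That last verification is where I expect the real work to be, and it is the main obstacle. The argument of~\cite{Bes99a} presents the de Rham side of the Abel--Jacobi map by an explicit \cech-style construction: one chooses a form $\omega_D$ with residue divisor $D$, trivializes it locally by primitives, and the resulting cohomology class pairs with $\omega$ to give $\int_D\omega$. In the semi-stable case one must (i) know that the Bloch--Kato exponential still has this concrete description on $H_{\textup{st}}^1$ rather than $H_f^1$ in the narrow sense --- this is exactly~\cite[1.15, 1.19]{Nek93} as recorded in Lemma~\ref{exp}, so it is available --- and (ii) know that the ``primitive'' of $\omega_D$ supplied by Vologodsky integration is the correct analytic avatar of the de Rham trivialization. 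The cleanest route is to use ingredient~(2) from the introduction, the comparison~\cite{Bes-Zer13} expressing Vologodsky integration on a semi-stable curve in terms of Coleman integration on the components of the special fibre: this lets one reduce the pairing computation, component by component, to the good-reduction statement of~\cite{Bes99a}, with the monodromy operator $N$ (which is invisible in the Coleman case) matching the combinatorial contributions coming from the dual graph of the special fibre. So the proof reduces to: (a) recalling the \cech description of $\exp_{\textup{BK}}^{-1}$ from~\cite{Bes99a,Nek93}; (b) plugging in the Vologodsky primitive of $\omega_D$; (c) using~\cite{Bes-Zer13} to check term-by-term that the pairing against $\omega$ equals $\int_D\omega$, the $N$-contributions being accounted for by the difference between the naive and the $(f,N)$-equivariant splittings. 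The details of (c) --- matching monodromy with the graph combinatorics --- are what the later sections of the paper are set up to handle, so here I would state the result and defer the delicate index computation to Sections~3 and~4.
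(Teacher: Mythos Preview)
Your proposal misidentifies where the difficulty lies and therefore brings in machinery that is not needed for this proposition. First, a small but telling slip: $\Theta$ is only the composite of the first \emph{three} arrows in~\eqref{Thetaeq}, landing in $\DR(E_2)/F^0$; the splitting $w'$ comes afterwards and plays no role in the present statement. Consequently there is no $(f,N)$-equivariant splitting to worry about here, and no monodromy or dual-graph combinatorics should appear. The ingredients you propose to import from~\cite{Bes-Zer13} and from Sections~3--4 are used in the paper for the \emph{other} half of the comparison (identifying the projection $\Psi$ with the $(f,N)$-splitting), not for $\Theta$.

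The actual proof is short and does not require any reduction to the good-reduction case. The key object you are missing is the generalized Jacobian $J$ of $X$ relative to $\cz$, sitting in $0\to T\to J\to J_X\to 0$ with $T$ a split torus. One has $E_2\cong T_p(J)\otimes\Qp$, and the map $A\xrightarrow{[E]}\hst^1(K,E_2)$ factors as $A\to J(K)\xrightarrow{\text{Kummer}}\hst^1(K,E_2)$, sending $D$ to its class in $J(K)$. On the other side, the Vologodsky integral of a holomorphic form against a degree-zero divisor is \emph{by definition} the Zarhin--Colmez integral, i.e.\ the logarithm $\log_J:J(K)\to T_0(J)\cong\hdr^1(X;\cz/K)/F^1$ paired against $\omega$; this identification holds with no assumption on the reduction. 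So the proposition reduces to the commutativity of the square comparing Kummer${}\circ{}\pi_2\circ\eqref{hstseq}$ with $\log_J$. By Lemma~\ref{exp} and~\cite[Example~3.10.1]{Blo-Kat90} this square commutes near the identity; uniqueness of the logarithm extends it to $J(K)_f$; and the remaining discrepancy lives in the torus $T$, where it is Lemma~\ref{gmcase}. No \v{C}ech representatives, no component-by-component analysis, and no appeal to~\cite{Bes-Zer13} are needed.
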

\newcommand{\gm}{\mathbb{G}_m}
\begin{proof}
Recall the generalized Jacobian
$J$ of $X$ with respect to $\cz$. It sits in a short exact sequence
\begin{equation}\label{shortJ}
  0\to T \to J\to J_X\to 0\;,
\end{equation}
where $J_X$ is the Jacobian of $X$.  Since we are assuming that $\cz$ splits over
$K$ we have $T\isom  (\bigoplus_{x\in \cz} \gm)/\gm$. Just as in~\cite[Prop.~5.3]{Bes99a} We have $E_2\isom T_p(J)\otimes \Qp$ and a  commutative diagram
\begin{equation}\label{diagonal}
  \xymatrix{
    A \ar[r]^-{[E]} \ar[dr] & \hst^1(K,E_2) \\
     & J(K) \ar[u]_{\textup{Kummer}}
}
\end{equation}
where the diagonal maps sends a zero divisor to its class in $J(K)$. Next, recall that the Vologodsky integral equals in this case the Zarhin integral, which is defined by
\begin{equation}
  \int_D \omega = \log_J(D)(\omega)\;,
\end{equation}
where the right hand side means the logarithm of $J$
\begin{equation}
 \log_J: J(K) \to T_0(J)\isom \hdr^1(X; \cz/K)/F^1
\end{equation}
evaluated at the class of the divisor $D$ and then paired with $\omega$ via the duality~\eqref{Duality}. In view of~\eqref{Thetaeq} and~\eqref{diagonal} The proof will be complete if we show the commutativity of
\begin{equation}\label{keydiag}
\xymatrix{
J(K)\ar[r]^{\text{Kummer}} \ar[d]^{\log_J} &  \hst^1(K,E_2) \ar[d]^{\pi_2\circ \eqref{hstseq}} \\
{ \hdr^1(X; \cz/K)/F^1} \ar[r]^{\isom} & \DR(E_2)/F^0
}
\end{equation}
To see this, we first note that by Lemma~\ref{exp} the map
\begin{equation*}
  \eqref{hstseq}|_{\DR(E_2)/F^0}:  \DR(E_2)/F^0 \to  \hst^1(K,E_2)
\end{equation*}
is just the Bloch-Kato exponential map. Therefore, by~\cite[Example~3.10.1]{Blo-Kat90} the diagram
\begin{equation*}
  \xymatrix{
    { \DR(E_2)/F^0} \ar[r]^{\isom} \ar[d]^{ \eqref{hstseq}} &  T_0(J)\ar[d]^{\operatorname{exp}}\\
     {\hst^1(K,E_2)} & J(K) \otimes \Qp  \ar[l]^{\text{Kummer}}
    }
\end{equation*}
commutes, which, given the fact that the exponential is locally an inverse of the logarithm, proves that~\eqref{keydiag} commutes at least at a neighborhood of the identity element. Then, by the uniqueness of the log map (\cite[Chapitre III, 7.6]{Bou72} and \cite{Zar96}) the diagram commutes on the subgroup $J(K)_f$ defined there. Finally, as Zarhin notes, after multiplying by an appropriate integer, each element of $J(K)$ lies in the sum of $J(K)_f$ and the image of $T(K)$. Thus, the commutativity is reduced to the same statement for $T$, which is just Lemma~\ref{gmcase}.
\end{proof}

\section{The double index}
\label{sec:double}

In this section we recall the theory of the double index~\cite{Bes98b,Bes00} and in particular the alternative definition of the projection $\Psi$ of~\eqref{Psimap} given in~\cite{Bes00}

We will fix a branch of the logarithm, e.g., the one given to us by~\eqref{diagonal}
\begin{definition}\label{annulus}
  An annulus over $K$ is a rigid analytic space $e$ isomorphic to a
  space of the form $\{r<|z|<s\}$ via a function $z$ which we call a
  uniformizer.
\end{definition}

For an annulus $e$ with a uniformizer $z$ we consider the space $\aloga(e)=\OO(e) + K\cdot \log(z)$, viewed as a subspace of the space of locally analytic functions on $e$,  and the surjective differential $d: \aloga(e) \to \Omega^1(e)$. We note that $\aloga(e)$ is independent of the choice of the uniformizer though it does depend on the choice of the branch of the logarithm. To make some statements below uniform, we also consider annuli with infinitesimal width around a point $e$, where the corresponding space is $\aloga(e) = K((z))+ K\cdot \log(z)$ and $\Omega^1(e) = K((z)) dz$, with $z$ a uniformizing parameter at the point.

According to Coleman~\cite[Lemma~2.1 and the following paragraph]{Col89}, annuli can be given two opposite \emph{orientations} corresponding to residue homomorphisms $\res: \Omega^1(e) \to K$, factoring via cohomology. A uniformizer $z$ determines an orientation by the condition $\res dz/z = 1$ and clearly the uniformizer $z^{-1}$ determines the reverse orientation. Annuli of width $0$ around a point have a canonical orientation corresponding to $\res dz/z =1$ for a uniformizer. An annulus together with a choice of orientation is called an \emph{oriented annulus}. For an oriented annulus $e$ we let $\tau(e)$ be the annulus with reverse orientation and we have the residue homomorphism
\begin{equation}
  \label{eq:orientation}
  \res_e: \Omega^1(e) \to K \text{ with } \res_{\tau(e)} = -\res_e
\end{equation}
\begin{definition}\label{doubdef}
  The double index on an oriented annulus $e$ is the unique anti-symmetric bilinear form
  $\pair{~,~}: \aloga(e) \times \aloga(e) \to K$ with the property that
  $\pair{F,G} = \res_e F d G$ whenever the left hand side has a meaning, i.e., when $\res_e d F = 0$.
\end{definition}
When needed, we denote the pairing relative to the oriented annulus $e$ by $\pair{~,~}_e$. The pairing relative to the width zero annulus around a point $x$ will be denoted $\pair{~,~}_x$.
The following is immediate
\begin{lemma}\label{locdoubprop}
  The double index has the following properties.
  \begin{enumerate}
  \item If $c$ is a constant function then $\pair{c,G} = c \res dG = -\pair{G,c} $\label{locdoubprop1}
  \item With respect to reversing orientation we have $\pair{~,~}_{\tau(e)} = - \pair{~,~}_{e}$.
  \end{enumerate}
\end{lemma}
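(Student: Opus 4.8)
The statement to prove is Lemma~\ref{locdoubprop}, asserting the two basic properties of the double index on an oriented annulus. My plan is to derive both properties directly from the defining characterization in Definition~\ref{doubdef}, namely that $\pair{\,,\,}$ is the unique anti-symmetric bilinear form on $\aloga(e)\times\aloga(e)$ agreeing with $(F,G)\mapsto \res_e F\,dG$ on the locus where $\res_e dF=0$. The only preliminary point worth checking is that this characterization is well-posed: every element of $\aloga(e)$ can be written as a $K$-linear combination of elements $F$ with $\res_e dF = 0$ (e.g. writing a general element as $\sum a_i z^i + c\log z$, all terms have exact differential except possibly the $dz/z$ term, which carries the residue), so the formula $\res_e F\,dG$ together with bilinearity and antisymmetry determines the pairing, and one checks it is consistent on overlaps; I will take this existence/uniqueness as given since it is part of Definition~\ref{doubdef}.

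For property~(1), let $c$ be a constant function; then $dc = 0$, so in particular $\res_e dc = 0$ and the defining formula applies with $F = c$: $\pair{c,G} = \res_e c\,dG = c\,\res_e dG$, using $K$-linearity of the residue. The equality $\pair{c,G} = -\pair{G,c}$ is then just the antisymmetry of the pairing built into Definition~\ref{doubdef}. So this part is essentially immediate once one notes that a constant lies in the ``good'' locus $\{\res_e dF = 0\}$.

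For property~(2), the reverse-orientation annulus $\tau(e)$ has the same underlying space and the same $\aloga$, and by~\eqref{eq:orientation} its residue map is $\res_{\tau(e)} = -\res_e$. I will verify that $-\pair{\,,\,}_e$ satisfies the characterizing property of $\pair{\,,\,}_{\tau(e)}$: it is clearly bilinear and anti-symmetric, and whenever $\res_{\tau(e)} dF = 0$ — equivalently $\res_e dF = 0$ — we have $-\pair{F,G}_e = -\res_e F\,dG = \res_{\tau(e)} F\,dG$. By the uniqueness clause in Definition~\ref{doubdef}, $-\pair{\,,\,}_e = \pair{\,,\,}_{\tau(e)}$, which is the claim.

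There is no real obstacle here; the lemma is labelled ``immediate'' in the text and the content is purely formal manipulation of the defining property. The only place requiring a moment's care is making sure that in each case the element being fed into the first slot actually lies in the subspace $\{\res_e dF = 0\}$ on which the naive residue formula is valid — for constants this is automatic, and for property~(2) the condition is symmetric under $e \leftrightarrow \tau(e)$ — after which everything follows from bilinearity, antisymmetry, and the uniqueness assertion of Definition~\ref{doubdef}.
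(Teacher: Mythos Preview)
Your proof is correct and matches the paper's approach: the paper simply states ``The following is immediate'' with no further argument, and your derivation of both properties directly from the defining characterization in Definition~\ref{doubdef} is exactly the intended verification.
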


One defines two related global indices by summing all the local indices. One uses Vologodsky integration and the other Coleman integration. To describe the first, let $X$ again denote a smooth and proper curve over $K$ and let $\cz$ be a subscheme consisting of a finite number of $K$-rational points.
\begin{definition}
  The global index pairing on $X-\cz$ is defined by
  \begin{equation}
    \label{eq:globalpair}
    \pair{\omega,\eta}_{\gl}:= \sum_{x\in \cz} \pair{F,G}_x\;,\; \omega,\eta \in H^0(X-\cz,\Omega^1)
  \end{equation}
  Where $F$ and $G$ are Vologodsky integrals of $\omega$ and $\eta$ respectively.
\end{definition}
Hidden in this definition is the fact that the pairing depends only on $\omega$ and $\eta$ and not on the choice of particular integrals $F,G$, which is a consequence of~\eqref{globpropb} of Proposition~\ref{globprop} below. Note that the definition above differs in emphasis from the one given in~\cite[(3.1)]{Bes00} where the pairing was between any two meromorphic forms, as here we have fixed the possible singularities.

The following Proposition summarizes several properties of the global 
\begin{proposition}\label{globprop}
  The global pairing satisfies the following properties.
  \begin{enumerate}
  \item It is alternating.
  \item We have $\gpair{d f, \eta}=0 $ for any rational function $f$. Consequently, the global pairing induces a well defined pairing
    \begin{equation}
      \label{eq:gpairdr}
      \gpair{~,~}: \hdr^1(X-\cz/K)\times \hdr^1(X-\cz/K) \to K
    \end{equation}\label{globpropb}
  \item The following diagram commutes
    \begin{equation}
      \label{eq:gglrest}
      \xymatrix{
        {\hdr^1(X-\cz/K)}\times \hdr^1(X-\cz/K) \ar[r]^-{\gpair{~}} & K \\
        {\hdr^1(X/K)}\times \hdr^1(X/K) \ar[u] \ar[ur]^{\cup} & 
        }
    \end{equation}
  \end{enumerate}
\end{proposition}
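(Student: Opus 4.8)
The plan is to establish the three properties of the global index pairing $\gpair{~,~}$ by reducing everything to local computations on annuli, exploiting the known behavior of Vologodsky integrals and the residue theorem. First I would recall that for $\omega, \eta \in H^0(X-\cz, \Omega^1)$ with Vologodsky primitives $F, G$, the quantity $\pair{F,G}_x$ at a point $x \in \cz$ is the double index on the width-zero annulus around $x$. The key input is that a Vologodsky integral of a form on $X-\cz$, when restricted to a punctured disc around $x$, lies in $\aloga(e)$ for the width-zero annulus $e$ at $x$ (this is precisely the local shape of Vologodsky integration on curves with semi-stable reduction, via the Besser--Zerbes description), so the local double index is defined. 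For alternating: on each width-zero annulus, $\pair{F,F}_x = \res_x F\,dF$ whenever $\res_x dF = 0$; since $\omega = dF$ has no residue at $x$ (it extends to a regular form at worst with a pole, but here $\omega$ is regular on $X-\cz$ so $\res_x \omega$ can be nonzero — actually I should be careful: $\omega$ may have nonzero residue at points of $\cz$). The anti-symmetry of the local double index (Definition~\ref{doubdef}) already gives $\pair{F,G}_x = -\pair{G,F}_x$ on each annulus, hence $\gpair{\omega,\eta} = -\gpair{\eta,\omega}$, and $\gpair{\omega,\omega} = 0$ follows, giving property~(1).

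For property~(2), the vanishing $\gpair{df,\eta} = 0$ for a rational function $f$: here $f$ itself is a global Vologodsky primitive of $df$, so $\gpair{df,\eta} = \sum_{x\in\cz}\pair{f,G}_x$. Since $f$ is rational, $\res_x df = 0$ at every $x$ (it is the differential of a meromorphic function, hence has no residues), so $\pair{f,G}_x = \res_x f\,dG = \res_x f\,\eta$. Then $\sum_{x\in\cz}\res_x(f\eta)$ — but $f\eta$ is a meromorphic form on $X$ with poles only along $\cz$ (and wherever $f$ and $\eta$ have poles, all within $\cz \cup \{\text{poles of }\eta\}$; since $\eta$ is regular on $X-\cz$ this is contained in $\cz$), so by the residue theorem on the complete curve $X$, $\sum_{x\in X}\res_x(f\eta) = 0$, and all residues are supported on $\cz$. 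Hence $\gpair{df,\eta} = 0$. Combined with anti-symmetry this kills $\gpair{\omega, dg}$ as well, so the pairing descends to $\hdr^1(X-\cz/K)$, giving~\eqref{eq:gpairdr}. One subtlety: I must check the pairing is well-defined independently of the choice of Vologodsky primitives $F, G$ — but two primitives of $\omega$ differ by a locally constant, in fact globally constant (on the connected curve) function $c$, and $\pair{c, G}_x = c\,\res_x dG = c\,\res_x\eta$, which sums to $c\sum_{x\in\cz}\res_x\eta$; this vanishes because $\eta$ is a global meromorphic form on $X$ with poles only in $\cz$, so again the residue theorem applies. This is exactly the promised consequence of property~(2).

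For property~(3), the compatibility with the cup product on $\hdr^1(X/K)$: given $\omega, \eta \in H^0(X, \Omega^1)$ extending to regular forms (or more precisely classes in $\hdr^1(X/K)$, which I would represent by forms of the second kind or by pairs in a hypercohomology/Čech model), their images in $\hdr^1(X-\cz/K)$ are represented by the same forms, now viewed on $X-\cz$. I would compute $\gpair{\omega,\eta}$ using local primitives: near each $x \in \cz$ the form $\omega$ is regular, so its Vologodsky primitive $F$ is an honest analytic primitive (no $\log$ term, since $\res_x\omega = 0$). Then $\pair{F,G}_x = \res_x F\,dG = \res_x F\eta$. Summing over $\cz$ and comparing with the standard formula for the cup product $\omega \cup \eta$ on $\hdr^1(X/K)$ — which is also expressed as a sum of residues $\sum_x \res_x(\tilde F \eta)$ over a set of points where local primitives $\tilde F$ of $\omega$ fail to glue — the two expressions agree: the cup product formula can be computed using any finite set of points containing the ``bad'' locus, and enlarging that set to include $\cz$ does not change the sum (the extra terms vanish since $\omega$ is regular there and its primitive is single-valued). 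The main obstacle will be pinning down precisely which model of $\hdr^1(X/K)$ and which residue-formula for the cup product to use so that the identification of $\gpair{\omega,\eta}$ with $\omega \cup \eta$ is clean; once the right bookkeeping of local primitives versus global de Rham classes is set up, the identity is the classical residue-pairing description of cup product. I would cite~\cite{Col-Gro89} or~\cite{Bes00} for the relevant cup-product formula and note that the argument is identical in structure to the good-reduction case treated there, the only new point being that Vologodsky primitives coincide with naive analytic primitives near points where the form is regular.
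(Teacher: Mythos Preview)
Your proof is correct and follows the same route as the paper, which simply records part~(1) as obvious (anti-symmetry of the local double index) and cites \cite[Lemmas~3.3 and~3.4]{Bes00} for parts~(2) and~(3). Your arguments---using that $f$ is its own primitive together with the residue theorem on the complete curve $X$ for $\gpair{df,\eta}=0$, and identifying the sum $\sum_{x\in\cz}\res_x(F\eta)$ with the classical residue formula for the cup product when $\omega,\eta$ are of the second kind---are precisely the content of those cited lemmas, so nothing further is needed.
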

\begin{proof}
The first statement is obvious, the second and third are~\cite[Lemmas~3.3,3.4]{Bes00} respectively.
\end{proof}
The following is a slight reformulation of~\cite[Definition~3.9]{Bes00}
\begin{definition}\label{psidef}
  We define a map $\Psi: \hdr^1(X-\cz) \to \hdr^1(X)$ to be the adjoint of the restriction map with respect to the global and cup product pairings
  \begin{equation*}
    \Psi(x) \cup y = \gpair{x,y|_{X-\cz}}
  \end{equation*}
\end{definition}
We note that in~\cite{Bes00} the map, called $\Psi'$ there, was defined on meromorphic forms rather than cohomology classes, but the two definitions are clearly equivalent. The map $\Psi$ generalizes the logarithm for the universal vectorial extension as the following Theorem, which is~\cite[Proposition 3.11]{Bes00}, shows.
\begin{theorem}
  Let $\eta$ be a form of the third kind on $X$, holomorphic on $X-\cz$. Then $\Psi([\eta]) = \Psi(\eta)$, where $[\eta]\in \hdr^1(X-\cz/K)$ is the cohomology class of $\eta$ while $\Psi(\eta)$ is as in~\cite{Col-Gro89}.
\end{theorem}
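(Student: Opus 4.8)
### Proof proposal

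The plan is to compare the two constructions of $\Psi$ — the one in~\cite{Col-Gro89} defined via the logarithm of the universal vectorial extension, and the one in Definition~\ref{psidef} defined via the global index pairing $\gpair{~,~}$ — by reducing the statement to a purely local computation on annuli together with the already established properties of $\gpair{~,~}$ collected in Proposition~\ref{globprop}. The key point to exploit is that a form of the third kind $\eta$ on $X$, holomorphic on $X-\cz$, has a Vologodsky primitive $F$ which, in each residual disc/annulus around a point $x\in\cz$, looks like $c_x\log(z_x)+(\text{analytic})$, where $c_x=\res_x\eta$. Thus $\pair{F,G}_x$, for a second class $[\eta']\in\hdr^1(X-\cz/K)$ with Vologodsky primitive $G$, can be computed explicitly in terms of residues and of the values of the analytic part of the primitives.

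First I would recall the Coleman–Gross characterization: $\Psi(\eta)$ is characterized by the two properties listed after~\eqref{Psimap}, namely that it equals the identity on holomorphic forms and that it kills $\dlog(g)$ for $g\in\OO(X-\cz)^\times$ — and, more quantitatively, that for any holomorphic $\omega'$ on $X$ one has $\Psi(\eta)\cup[\omega'] = \sum_{x\in\cz}\res_x(F_\eta\,\omega')$, where $F_\eta$ is a (Coleman, equivalently Vologodsky) primitive of $\eta$; this is essentially the Coleman–Gross formula for the cup product of a third-kind differential with a holomorphic one. On the other side, Definition~\ref{psidef} says $\Psi([\eta])\cup[\omega'] = \gpair{[\eta],[\omega']|_{X-\cz}} = \sum_{x\in\cz}\pair{F_\eta,F_{\omega'}}_x$ where $F_{\omega'}$ is a Vologodsky primitive of the holomorphic form $\omega'$. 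Since $\res_x d F_{\omega'} = \res_x\omega' = 0$ (as $\omega'$ is holomorphic at $x$), the double index is "defined the easy way": $\pair{F_\eta,F_{\omega'}}_x = \res_x(F_\eta\, dF_{\omega'}) = \res_x(F_\eta\,\omega')$. Summing over $x\in\cz$ gives exactly the Coleman–Gross quantity, so $\Psi([\eta])\cup[\omega'] = \Psi(\eta)\cup[\omega']$ for all holomorphic $\omega'$. Because the cup product is a perfect pairing between $F^1\hdr^1(X/K)=H^0(X,\Omega^1_{X/K})$ and $\hdr^1(X/K)/F^1$, and both $\Psi([\eta])$ and $\Psi(\eta)$ lie in $\hdr^1(X/K)$, it remains only to pin down their $F^1$-components; but both constructions are designed to land in the chosen complement — more precisely, Definition~\ref{psidef} composed with $W$ gives $\omega_y$ with $\Psi(\omega_y)\in W$, and likewise Coleman–Gross — so the two classes agree.

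To make the second paragraph rigorous I would need the precise local shape of a Vologodsky primitive of a third-kind form, i.e.\ that near $x\in\cz$ it lies in $\aloga(e)$ with $\log$-coefficient equal to $\res_x\eta$; this is exactly the content of the Zarhin–Colmez / Vologodsky theory recalled implicitly in Section~2 and in~\cite{Bes-Zer13}, and it is what guarantees that $F_\eta\,dF_{\omega'}$ has a well-defined residue (its only possible pole at $x$ comes from $\log(z_x)\cdot(\text{analytic})$, whose residue is the residue of the analytic correction term, a finite quantity). I would also invoke Proposition~\ref{globprop}\eqref{globpropb} to know that $\gpair{~,~}$ is well defined on cohomology, so that the computation is independent of the choice of primitives, and Proposition~\ref{globprop} part (3) — the compatibility with $\cup$ on $\hdr^1(X/K)$ — to handle the case where $\eta$ is itself holomorphic, giving property (1) of the Coleman–Gross $\Psi$ for free.

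The main obstacle I anticipate is not the cohomological bookkeeping but the verification that the Vologodsky primitive of $\eta$ has precisely the asserted local description, and in particular that the "analytic part" is genuinely in $\OO(e)$ (rigid analytic on the annulus) rather than merely locally analytic — this is needed for $\res_x(F_\eta\,\omega')$ to coincide with the classical residue appearing in the Coleman–Gross formula. This requires citing the structure theory of Vologodsky functions on curves with semistable reduction from~\cite{Bes-Zer13} carefully; once that local input is in hand, the rest is a one-line identification of two residue sums via Definition~\ref{doubdef} and the perfectness of the de Rham cup product. A secondary subtlety is making sure the branch of the logarithm used in the double index (fixed at the start of Section~3 via~\eqref{diagonal}) matches the one used implicitly in the Coleman–Gross $\Psi$; but since property~\eqref{dlogprop} of $\Psi$ is branch-independent and $\Psi$ is determined on all of $\hdr^1(X-\cz/K)$ by its behavior against holomorphic forms, this causes no real trouble.
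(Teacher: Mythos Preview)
The paper does not prove this theorem; it simply records that the statement is \cite[Proposition~3.11]{Bes00} and moves on. So there is no in-paper argument to compare against, and your proposal must be judged on its own.

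Your overall strategy---show $\Psi([\eta])\cup[\omega]=\Psi(\eta)\cup[\omega]$ for enough test classes $[\omega]$ and conclude by non-degeneracy of the cup product---is the right shape and is indeed how \cite{Bes00} proceeds. But two of your steps are wrong as written.

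First, you misread Definition~\ref{doubdef}. The identity $\pair{F,G}_x=\res_x(F\,dG)$ holds when $\res_x dF=0$, not when $\res_x dG=0$. In your situation $\res_x dF_\eta=\res_x\eta\ne 0$ at the points of $\cz$, so $\res_x(F_\eta\,\omega')$ is not even defined: the product $F_\eta\,\omega'$ carries a $\log z$ factor and does not lie in $\Omega^1(e)$. What one actually gets, via anti-symmetry together with $\res_x\omega'=0$, is
\[
\pair{F_\eta,F_{\omega'}}_x=-\pair{F_{\omega'},F_\eta}_x=-\res_x\bigl(F_{\omega'}\,\eta\bigr).
\]
This is the formula that matches the Coleman--Gross side, but you then have to invoke the correct identity for $\Psi(\eta)\cup[\omega']$ from \cite{Col-Gro89} or \cite{Bes00}, not the one you wrote.

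Second, and more seriously, testing only against holomorphic $\omega'$ determines $\Psi([\eta])-\Psi(\eta)$ only modulo $F^1\hdr^1(X/K)$, since $F^1$ is maximal isotropic for the cup product and hence $(F^1)^\perp=F^1$. Your attempt to dispose of the remaining $F^1$-ambiguity by asserting that ``both constructions are designed to land in the chosen complement $W$'' is incorrect: neither $\Psi$ takes values in $W$. Indeed, by property~(1) after~\eqref{Psimap} and by Proposition~\ref{globprop}(3), both maps restrict to the identity on $F^1$, and $F^1\cap W=0$. The subspace $W$ plays no role whatsoever in the definition of either $\Psi$; it enters only afterwards, in the choice of $\omega_y$. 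To close the argument you must pair against \emph{all} of $\hdr^1(X/K)$, representing a general class by a form of the second kind and computing $\gpair{[\eta],[\omega]}$ via the same local residue manipulation (now with $F_\omega$ possibly having poles but no $\log$-term). That is precisely what is carried out in \cite{Bes00}.
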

This now completes the definition of the extension of the Coleman-Gross height pairing using Vologodsky integration.

To end this section we review the second global pairing. It is defined on Coleman's \emph{wide open spaces}, certain rigid analytic spaces whose (slightly simplified) definition can be given as follows: Suppose that the curve $X$ has an integral model $\XX$ which is smooth and proper over $\OO_K$. and pick a finite collection $S$ of $k$-rational points of $\XX\otimes k$. to each such point $x$ corresponds the \emph{residue disc} of $X$-points reducing to $x$, denoted $D_x$ and isomorphic to a standard unit disc via a uniformizer $z_x$. The wide open space $U$ is obtained by removing from $X$ the rigid subspace
\begin{equation}
  \rigS = \bigcup_{x\in S} \{|z_x|\le 1-\epsilon\}\label{discoid}
\end{equation}
for some $1-\epsilon$ close to $1$ (the space $\rigS$ is called a ``Discoid domain'' by Coleman). The ``leftover'' annuli $1>|z_x|>1-\epsilon$ are called the \emph{annuli ends} of $U$ and they are naturally oriented by their uniformizer.
We denote the finite set of annuli ends of $U$ by $\End(U)$. For future reference we call $\XX\otimes  k -S$ the \emph{reduction} of $U$.
\begin{definition}
  Let $U$ be a wide open space.
  The global index pairing for $1$-forms on $U$ is defined by
  \begin{equation}
    \label{eq:globpair}
    \pair{\dd F,\dd G}_{\gl}:= \sum_{e\in \End(U)} \pair{F,G}_e\;,
  \end{equation}
  Where $F$ and $G$ are Coleman integrals of $dF$ and $dG$ respectively.
\end{definition}
For this definition to make sense one needs to note that Coleman integration is defined for holomorphic forms on $U$ and that their restrictions to each annuli end $e$ belongs to $\aloga(e)$.
Analogous properties to the ones of the global pairing above hold here as well.
\begin{proposition}\label{grig}
  The global pairing satisfies the following properties.
  \begin{enumerate}
  \item It is alternating.
  \item We have $\gpair{d f, \eta}=0 $ for any $f\in \OO(U)$. Consequently, the global pairing induces a well defined pairing
    \begin{equation}
      \label{eq:gpairdru}
      \gpair{~,~}: \hdr^1(U)\times \hdr^1(U) \to K
    \end{equation}
  \item The pairing is compatible with restriction to a smaller $U$.
  \item If the reduction of $U$ is a rational curve, then the pairing is trivial.\label{grig4}
  \end{enumerate}
\end{proposition}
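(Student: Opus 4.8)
\emph{Parts (1)--(3)} are proved exactly as the corresponding statements for the Vologodsky global pairing in~\cite[\S3]{Bes00}. Part (1) is immediate from the anti-symmetry of the local double index. For (2): if $f\in\OO(U)$ then $f$ is itself a Coleman integral of $df$, and on each annulus end $e$ we have $\res_e d(f|_e)=0$, so $\pair{f,G}_e=\res_e(f|_e\,dG)$; summing over $\End(U)$ gives $\gpair{df,\eta}=\sum_{e\in\End(U)}\res_e(f\eta)$, which vanishes because $f\eta$ extends to a meromorphic form on $X$ with poles only inside the excised discs, so its sum of residues over the annuli ends equals the sum of \emph{all} its residues on the complete curve $X$. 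Applying this to $\eta$ itself shows $\sum_e\res_e dG=0$, so the pairing is independent of the chosen integrals and descends to $\hdr^1(U)$. For (3): Coleman integration and the local double index are compatible with restriction (the latter depends only on the germ along an annulus end), and any annulus end of a smaller wide open that lies inside a residue disc of $U$ contributes $0$ by the residue argument above.

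\emph{Part (4).} Assume the reduction $\XX\otimes k-S$ of $U$ is rational; then $\XX\otimes k\cong\PP^1_k$, hence $X\cong\PP^1_K$ and $\hdr^1(X/K)=0$. After applying an element of $\mathrm{PGL}_2(\OO_K)$ we may assume that one of the points of $S$ reduces from $\infty$, and denote by $a_0=\infty,a_1,\dots,a_n\in\OO_K$ lifts of the points of $S$, pairwise distinct modulo the maximal ideal. Since $X$ has no de Rham cohomology, every class on $U$ is represented by a differential of the third kind; hence $\hdr^1(U)$ is spanned by the classes $\eta_i:=\dlog(z-a_i)=\tfrac{dz}{z-a_i}$ ($i=1,\dots,n$), and each $z-a_i$ is a unit on $U$ (its only zero and pole lie in the excised discs at $a_i$ and at $\infty$). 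By bilinearity and part (1) it suffices to prove $\gpair{\eta_i,\eta_j}=0$ for $i\ne j$. The Coleman integral of $\eta_i$ is $F_i=\log(z-a_i)$ for the chosen branch, so $\gpair{\eta_i,\eta_j}=\sum_{m=0}^n\pair{F_i,F_j}_{e_m}$, where $e_m$ is the annulus end at the $m$-th reduction point, with natural uniformizer $z_m=z-a_m$ for $m\ge1$ and $z_0=1/z$.

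On each annulus end the double index is an elementary Laurent-series computation. If $m\notin\{i,j\}$, then (using that the $a_m$ are pairwise distinct units) $F_i|_{e_m}$ and $F_j|_{e_m}$ lie in $\OO(e_m)$ and, after removing their constant terms, in $z_mK[[z_m]]$; for such functions $h,h'$ one has $\pair{h,h'}_{e_m}=\res_{e_m}(h\,dh')=0$, since no $z_m^{-1}\,dz_m$ term can appear. For $m=0$ a similar bookkeeping applies: each $F_i|_{e_0}$ equals $-\log z_0$ plus an element of $z_0K[[z_0]]$, and one finds $\pair{F_i,F_j}_{e_0}=0$. For $m=i$ we have $F_i|_{e_i}=\log z_i$ while $F_j|_{e_i}=\log(z-a_j)$ is analytic with constant term $\log(a_i-a_j)$; hence by Definition~\ref{doubdef} and anti-symmetry $\pair{F_i,F_j}_{e_i}=-\log(a_i-a_j)$, and symmetrically $\pair{F_i,F_j}_{e_j}=\log(a_j-a_i)$. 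Therefore
\[
  \gpair{\eta_i,\eta_j}=-\log(a_i-a_j)+\log(a_j-a_i)=\log(-1)=0,
\]
because $a_j-a_i=-(a_i-a_j)$ and the $p$-adic logarithm annihilates roots of unity. Thus the pairing vanishes on a basis of $\hdr^1(U)$, hence identically.

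\emph{The main obstacle} is the conceptual content of this computation rather than any single estimate. The point is that, for units $f,g\in\OO(U)^\times$, the only annuli ends contributing to $\gpair{\dlog f,\dlog g}$ are those along which $f$ or $g$ has nonzero order, and each such contribution is the logarithm of the tame symbol of $f$ and $g$ at the corresponding point of $S$; these tame symbols multiply to $1$ by Weil reciprocity on the rational reduction, the sign occurring in the tame symbol being killed by $\log$. The explicit basis computation above is simply the concrete incarnation of this reciprocity. The one point requiring care is the orientation convention fixing the sign of $\res_{e_m}$ (equivalently, of the order of $f$ along $e_m$); but since the final answer is always the logarithm of a root of unity, the choice of signs does not affect the conclusion.
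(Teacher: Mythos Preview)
Your arguments for parts (1)--(3) are correct and essentially coincide with the paper's: the paper calls (1) obvious, cites \cite[Lemma~4.7]{Bes98b} for (2) (your residue argument is exactly the content of that lemma), and for (3) invokes the same triviality of the index on an annulus inside a disc where both forms extend holomorphically.

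For part (4) your proof is correct but genuinely different from the paper's. The paper argues structurally: by \cite[Prop.~4.10]{Bes98b} the global pairing on $U$ factors through a projection $\hdr^1(U)\to\hdr^1(X/K)$, and since the reduction of $U$ is rational one may take $X=\PP^1_K$, whose first de~Rham cohomology vanishes; hence the pairing is identically zero. You instead pick the explicit basis $\eta_i=\dlog(z-a_i)$ of $\hdr^1(U)$, compute the local indices $\pair{\log(z-a_i),\log(z-a_j)}_{e_m}$ at each annulus end by hand, and observe that the nonzero contributions at $e_i$ and $e_j$ cancel because their sum is $\log(-1)=0$. Your interpretation of this cancellation as Weil reciprocity for tame symbols is apt and makes the computation conceptually satisfying. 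The trade-off is that the paper's route is a one-line consequence of a general factorization result (which, however, itself requires some work), whereas your route is entirely self-contained and elementary, at the cost of an explicit case analysis. Either way the conclusion is the same; your argument has the minor advantage of not invoking the projection of \cite[Prop.~4.10]{Bes98b}, which the present paper does not otherwise develop.
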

\begin{proof}
The first statement is obvious, the second is~\cite[Lemma~4.7]{Bes98b} and the third follows because the index is trivial around an annulus inside a disc where both forms extend holomorphically. Property~\eqref{grig4} is a consequence of the fact that $\gpair{~,~}$ factors via a projection on the de Rham cohomology of $X$~\cite[Prop.~4.10]{Bes98b}, and when the reduction of $U$ is rational this $X$ may be taken to be a projective line which has no first de Rham cohomology.
\end{proof}
\begin{remark}\label{extend}
  The statement above continues to hold if we let $U$ be a domain obtained by removing discs and points. In particular, if we shrink such $U$ by removing instead of a finite number of points a disc that contains all of them the global index remains the same. This follows by an argument similar to the one in the proof of~\cite[Prop.~5.5]{Bes98b} using~\eqref{grig4} of Proposition~\ref{grig}.
\end{remark}

\section{The global pairing for curves with semi-stable reduction}
\label{sec:globsemi}

In this section obtain a pairing on cohomology of curves with semi-stable reduction, which will turn out to be the same as the global pairing~\eqref{eq:globalpair}.

We begin by collecting a few well known facts about graph cohomology. For us, a graph $\Gamma$ consists of a set of vertices $V=V(\Gamma)$ and oriented edges $E=E(\Gamma)$. Each edge $e\in E(\Gamma)$ has a tail and head, $e^+,e^- \in V(\Gamma)$ respectively, and for each edge we have the edge with reverse orientation $-e$ such that $(-e)^+ = e^-$ and $(-e)^- = e^+$
\newcommand{\Xgrph}{\Gamma(X)}
\begin{definition}
  Given an abelian group $A$ we define $0$ and $1$ cochains on $\Gamma$ with values in $A$ by
  \begin{equation*}
    C^0(\Gamma,A) = \{f: V(\Gamma) \to A\}\;,\;
    C^1(\Gamma,A) = \{f: E(\Gamma) \to A\;,\; f(-e) =-f(e)\}
  \end{equation*}
  and we define the differential $d: C^0(\Gamma,A) \to  C^1(\Gamma,A)$ by
  \begin{equation*}
    d f (e) = f(e^+) -f(e^-)\;.
  \end{equation*}
  This gives the graph complex, set in degrees $0$ and $1$, whose cohomology we call the graph cohomology with values in $A$, $H^\ast(\Gamma,A)$.
\end{definition}
\begin{definition}\label{pointwise}
  The pointwise product of two $1$-cochains $c$ and $d$ on a finite graph $G$ with values in a ring $A$ is given by
  \begin{equation*}
    c\cdot d = \sum_{e\in E(G)/\pm} c(e)\cdot d(e)\;.
  \end{equation*}
  Here, the sum is over the quotient set of unoriented edges $E(G)/\pm $ which can be done because the summand is invariant to switching the orientation.
\end{definition}
\newcommand{\hh}{\mathcal{H}}
\begin{definition}
  We define the differential $d^\ast: C^1(G,A) \to C^0(G,A) $ by the formula
  \begin{equation*}
    d^\ast f(v) = \sum_{e^+ = v} f(e)\;.
  \end{equation*}
The kernel of $d^\ast$ is the space of \emph{Harmonic cochains} with values in $A$,
\begin{equation*}
  \hh(\Gamma,A) := \ker d^\ast
\end{equation*}
\end{definition}
The following is well known
\begin{theorem}\label{thmharmonic}
Suppose $A$ is a field of characteristic $0$. Then one has
\begin{equation*}
  C^1(\Gamma,A) = \hh(\Gamma,A) \oplus d C^0(\Gamma,A)
\end{equation*}
Consequently, each element in $H^1(\Gamma,A)$ has a unique \emph{harmonic representative} in $\hh(\Gamma,A)$.
\end{theorem}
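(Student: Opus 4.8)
The plan is to realize this as a discrete Hodge decomposition, with $\hh(\Gamma,A)$ in the role of the harmonic cochains. Equip $C^0(\Gamma,A)$ with the nondegenerate symmetric bilinear form $\langle f,g\rangle=\sum_{v\in V(\Gamma)}f(v)g(v)$ and $C^1(\Gamma,A)$ with the pointwise product of Definition~\ref{pointwise}, $\langle c,d\rangle=\sum_{e\in E(\Gamma)/\pm}c(e)d(e)$, which is likewise nondegenerate. A direct computation shows that $d^\ast$ is the adjoint of $d$, that is, $\langle df,c\rangle=\langle f,d^\ast c\rangle$ for all $f\in C^0(\Gamma,A)$ and $c\in C^1(\Gamma,A)$; hence $\hh(\Gamma,A)=\ker d^\ast$ is exactly the orthogonal complement of $dC^0(\Gamma,A)$ in $C^1(\Gamma,A)$. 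In particular the two subspaces are orthogonal and, by nondegeneracy of the form on $C^1(\Gamma,A)$, their dimensions add up to $\dim C^1(\Gamma,A)$; so the claimed decomposition reduces to the single assertion $dC^0(\Gamma,A)\cap\hh(\Gamma,A)=0$, equivalently that the form on $C^1(\Gamma,A)$ restricts to a nondegenerate form on $dC^0(\Gamma,A)$.

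To prove that intersection is trivial --- the one place where the hypothesis ``$A$ a field of characteristic $0$'' rather than ``$A=\R$'' is used --- I would descend to the classical case. The maps $d$ and $d^\ast$, and therefore the subspaces $dC^0(\Gamma,-)$ and $\hh(\Gamma,-)=\ker d^\ast$ of $C^1(\Gamma,-)$, are defined over $\Z$; formation of images and kernels of maps of free $\Q$-modules commutes with the flat base change $-\otimes_{\Q}A$ (and $A$ is a $\Q$-algebra since $\operatorname{char}A=0$); and a direct-sum decomposition of finite-dimensional $\Q$-vector spaces is preserved and reflected by $-\otimes_{\Q}A$ and by $-\otimes_{\Q}\R$. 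Hence the decomposition over $A$ is equivalent to the decomposition over $\R$. Over $\R$ the form on $C^1$ is positive definite, so $C^1(\Gamma,\R)=dC^0(\Gamma,\R)\oplus(dC^0(\Gamma,\R))^\perp=dC^0(\Gamma,\R)\oplus\hh(\Gamma,\R)$; concretely, if $dg$ lies in $(dC^0(\Gamma,\R))^\perp$ then $\langle dg,dg\rangle=\langle d^\ast dg,g\rangle=0$, so $dg=0$. (Equivalently, the combinatorial Laplacian $\Delta=d^\ast d$ is positive semidefinite with $\ker\Delta=\ker d$, and $\operatorname{rank}\Delta=\operatorname{rank}d$ is stable under field extension.)

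Granting $C^1(\Gamma,A)=\hh(\Gamma,A)\oplus dC^0(\Gamma,A)$, the statement about $H^1$ is immediate: the graph complex is concentrated in degrees $0$ and $1$, so $H^1(\Gamma,A)=C^1(\Gamma,A)/dC^0(\Gamma,A)$, and the composite $\hh(\Gamma,A)\hookrightarrow C^1(\Gamma,A)\twoheadrightarrow H^1(\Gamma,A)$ is then an isomorphism; its inverse sends a class to its unique harmonic representative. The only real obstacle is the one flagged above --- over an arbitrary characteristic-zero field there is no positive-definite inner product, and the naive ``a vanishing sum of squares has vanishing summands'' argument fails --- but it is dissolved by the observation that $d$ and $d^\ast$ are integral and that (co)homology commutes with flat base change, reducing everything to the standard real computation.
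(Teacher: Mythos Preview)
Your argument is correct. The paper itself gives no proof of this theorem: it is introduced with the phrase ``The following is well known'' and left unproved, so there is no approach to compare against. Your discrete Hodge decomposition via the adjoint pair $(d,d^\ast)$ is the standard one.

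One small remark: the detour through $\R$ is unnecessary. The pointwise form on $C^1(\Gamma,\Q)$ is already positive definite (a sum of squares of rationals vanishes only if all summands vanish), so the computation $\langle dg,dg\rangle=0\Rightarrow dg=0$ works directly over $\Q$, and the base change to $A$ then finishes as you say. You also implicitly use that $\Gamma$ is finite (so that $C^0$ and $C^1$ are finite-dimensional and the forms are genuine inner products); this is harmless in the paper's context, where $\Gamma$ is the dual graph of a semistable curve, but is worth stating.
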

This theorem allows us to define the pointwise product on cohomology. Note that the pointwise product of a harmonic cochain and a boundary is $0$.
\begin{definition}\label{pointcoh}
  The pointwise product on $H^1(\Gamma,K)$, where $K$ is a field of characteristic $0$, is defined to be the product~\ref{pointwise} of any two representing cochains, one of which at least is harmonic.
\end{definition}

Returning to geometry now suppose that $X$ is the generic fiber of a proper $\OO_K$ scheme $\XX$ with semi-stable reduction
\begin{equation}
  \label{eq:Yi}
  T = \cup_v T_v\;.
\end{equation}
Let $\Xgrph$ be the dual graph of $T$ (this is of course an abuse of notation as it really depends on the particular model). The vertices correspond to the components $T_v$ while the edges are ordered pairs of intersecting components $(T_v,T_w)$ oriented from $v$ to $w$.

\newcommand{\red}{\operatorname{red}}
The reduction map $X\to T$ allows us to split $X$ into rigid analytic domains $U_v = \red^{-1} T_v$ which are clearly wide open spaces. These then intersect along annuli corresponding bijectively to the unoriented edges of $\Xgrph$ and we can make a bijection between oriented edges and oriented annuli. We need to choose a convention for this bijection and we do this as follows:
\begin{definition}\label{convention1}
  The orientation of the annulus corresponding to and edge $e$ is the one
  corresponding to it being the annulus end of $T_{e^+}$.
\end{definition}

We next recall the description of Coleman-Iovita~\cite{ColIov99,ColIov10} for the de Rham cohomology of $X-\cz$ and for its Frobenius and monodromy operators (Here we mean a linear Frobenius. Coleman and Iovita refine this by considering the semi-linear Frobenius on a $K_0$-lattice). When describing the Frobenius and monodromy we need to make the following assumption.
\begin{assumption}\label{split-div-ass}
  the subscheme $\cz$ splits as a union of $K$-rational points, each residing in a different residue disc. In particular, it is the special fiber of an $\OO_K$-subscheme $\cZ\subset \XX$ consisting of a disjoint union of sections.
\end{assumption}
The first de Rham cohomology of $X-\cz$ is the same as that of $X$, with log poles at $\cz$, and under Assumption~\ref{split-div-ass} also of the rigid space $X-\rigZ$, where $\rigZ$ is a Coleman ``Discoid domain''~\eqref{discoid}, which follows from the comparison between de~Rham and rigid cohomology~\cite{Bal-Chi94}. This cohomology may therefore be computed as the first cohomology of the total complex of the double complex
\begin{equation*}
  \xymatrix{
    {\bigoplus_{v\in V}} \OO(U_v-\cz) \ar[r]^d \ar[d] & \bigoplus_{v\in V} \Omega^1(U_v-\cz) \ar[d] \\ \bmin_{e\in E} \OO(e) \ar[r]^d & \bmin_{e\in E} \Omega^1(e)\;.
  }
\end{equation*}
Here, $\bmin$ is the $-$ eigenspace for the involution corresponding to switching the orientation of the edges and the vertical arrows are \v{C}ech differentials. Explicitly, a one cochain is given by
\begin{equation}\label{cocycle}
  \left((\omega_v\in \Omega^1(U_v-\cz))_{v\in V},(f_e\in \OO(e))_{e\in E} \right)\; \text{such that } f_{-e} = -f(e) \text{ and } d f_e = \omega_{e^+}|_e - \omega_{e^-}|_e
\end{equation}
and a one coboundary is a cochain of the form
\newcommand{\rest}{\operatorname{rest}}
\begin{equation}
  \left((d f_v), (f_{e^+}-f_{e^-})\right)\; \text{with } f_v \in \OO(U_v-\cz)\;.
\end{equation}
One easily gets the following exact sequence
\begin{equation}\label{mayer}
  \bigoplus_{v\in V} K \to  \bmin_{e\in E} K \to \hdr^1(X-\cz/K) \xrightarrow{\rest}
   \bigoplus_{v\in V} \hdr^1(U_v-\cz) \to  \bmin_{e\in E} \hdr^1(e)
\end{equation}
From the definition of graph cohomology we can easily get out of this the short exact sequence
\begin{equation}\label{mayer1}
  0\to H^1(\Xgrph,K) \to  \hdr^1(X-\cz/K) \to \Ker
  \left(\bigoplus_{v\in V} \hdr^1(U_v-\cz) \to  \bmin_{e\in E} \hdr^1(e)\right) \to 0
\end{equation}
Suppose now that Assumption~\ref{split-div-ass} holds. Then we may replace in all of the above $\cz$ by $\rigZ$ and we can describe the Frobenius structure. The Frobenius acts as $1$ on the graph cohomology to the left. On the right we have a subobject of $\oplus_{v\in V} \hdr^1(U_v-\rigZ) $ and each summand is given a Frobenius action via its identification with the rigid cohomology of $T_v-\cZ_s$. The weights for this action are $1$ and $2$, so one can define the action on $ \hdr^1(X-\rigZ /K) $ by defining the unique Frobenius equivariant splitting of the sequence~\eqref{mayer1}, which is the projection on the weight $0$ part obtained from the weight decomposition. According to~\cite[Section 2.2]{ColIov10}  this is given by the formula, which makes sense even without Assumption~\ref{split-div-ass},
\newcommand{\phsp}{\chi}
\begin{equation}\label{phsplit}
  \phsp((\omega_v)_{v\in V},(f_e)_{e\in E})(e) = f_e-\left(F_{\omega_{e^+}}|_e- F_{\omega_{e^-}}|_e\right)
\end{equation}
The monodromy operator also takes values in $H^1(\Gamma,K) \subset \hdr^1(X-\cz/K)$ and is given by the formula
\begin{equation}
N((\omega_v)_{v\in V},(f_e)_{e\in E})(e) = \res_e(\omega_{e^+})
\end{equation}
Here one needs to note that this is the same as $\res_e(\omega_{e^-})$ because the difference between the two forms is bounded by $d f_e$ and the resulting cochain satisfies the antisymmetry condition by~\eqref{eq:orientation}. Recall the convention of Definition~\ref{convention1} for the orientation on the annulus $e$.

We will now construct a pairing on $\hdr^1(X-\cz/K)$, which will later turn out to be the same as the global index pairing of~\eqref{eq:globalpair}. We begin with an auxiliary pairing. To define it, let $\omega,\eta \in H^0(X-\cz,\Omega^1)$ and pick as auxiliary data, for every vertex $v\in V$ Coleman integrals $F_v$ and $G_v$ of $\omega $ and $\eta$ on $U_v-\cz$. We then define
\begin{equation}\label{auxil}
  \apair{\omega,\eta} = \sum_v \sum_{x\in \cz\cap U_v} \pair{F_v,G_v}_x\;.
\end{equation}
Note that this pairing depends, in general, on the auxiliary choices of Coleman integrals.
\begin{lemma}\label{cupaux}
  Suppose $\omega,\eta$ are of the second kind, with cohomology classes $[\omega],[\eta]\in \hdr^1(X/K) $. Then
  \begin{equation*}
     \apair{\omega,\eta} = [\omega]\cup [\eta]\;,
  \end{equation*}
  and is in particular independent of the auxiliary choices.
\end{lemma}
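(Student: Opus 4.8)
The plan is to reduce the statement to a comparison between the local double index sum $\apair{\omega,\eta}$ and the global index pairing on a single large wide open space, and then to invoke Proposition~\ref{globprop}, specifically the compatibility~\eqref{eq:gglrest} of $\gpair{~,~}$ with the cup product for forms of the second kind. Since $\omega,\eta$ are of the second kind on $X$ (no poles at all, in fact), they extend holomorphically across every residue disc; in particular the putative poles along $\cz$ are not really there. The key point is therefore that, although the definition~\eqref{auxil} formally involves the width-zero annuli around the points of $\cz$ and \emph{local} Coleman primitives $F_v,G_v$ chosen disc-by-disc, one can reorganize this sum into the global index pairing~\eqref{eq:globpair} on the wide open space $U = X - \rigZ$, whose annuli ends are precisely the annuli surrounding the discs of $\cz$.

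First I would fix, for each $v$, the local Coleman primitives $F_v,G_v$ of $\omega,\eta$ on $U_v - \cz$ as in the definition, and observe that on the residue disc $D_x$ around a point $x\in\cz\cap U_v$ the forms $\omega,\eta$ are holomorphic, so they have genuine holomorphic primitives $F_x^{\mathrm{hol}},G_x^{\mathrm{hol}}$ there; these differ from $F_v,G_v$ on the annulus end $e_x$ by constants. By Lemma~\ref{locdoubprop}\eqref{locdoubprop1} the double index $\pair{F,G}_x$ is unchanged when we add constants (a constant pairs with anything to give that constant times a residue, and both $\omega,\eta$ have zero residue at $x$ since they are holomorphic there), so $\pair{F_v,G_v}_x = \pair{F_x^{\mathrm{hol}},G_x^{\mathrm{hol}}}_x$. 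Now I would compare the width-zero annulus around $x$ with the annulus end $e_x$ of $U$: since $\omega$ and $\eta$ extend holomorphically over $D_x$, the residues on these two annuli agree and the double index is the same (this is exactly the ``index is trivial around an annulus inside a disc where both forms extend holomorphically'' mechanism used in the proof of Proposition~\ref{grig}, together with Remark~\ref{extend}). Hence $\apair{\omega,\eta} = \sum_{x} \pair{F,G}_{e_x} = \gpair{\omega,\eta}_{\gl}$ for the wide open space $U$.

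Then I would conclude by applying Proposition~\ref{globprop}\eqref{globpropb} and the commutative triangle~\eqref{eq:gglrest}: since $[\omega],[\eta]$ lie in $\hdr^1(X/K)$ (they are of the second kind), the global pairing $\gpair{~,~}$ evaluates on them to the cup product $[\omega]\cup[\eta]$. Combining the two identities gives $\apair{\omega,\eta} = [\omega]\cup[\eta]$, which in particular does not depend on the auxiliary choices of Coleman primitives $F_v,G_v$. Strictly speaking, to match the normalization one should check that the single global index pairing on $X-\cz$ in~\eqref{eq:globalpair} (defined via Vologodsky integrals) and the one on the wide open space $U$ in~\eqref{eq:globpair} (defined via Coleman integrals) agree on forms that are holomorphic on all of $X$ up to the discs of $\cz$; this is where one uses that Vologodsky and Coleman integration coincide on a wide open space with good reduction of each $U_v$ (each $U_v$ has good, indeed rational-or-better, reduction as a wide open), so the Vologodsky primitives restricted to each $U_v$ may be taken to be the Coleman ones.

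The main obstacle I anticipate is bookkeeping rather than anything deep: one must be careful that the orientation conventions of Definition~\ref{convention1} and~\eqref{eq:orientation} are consistent between the width-zero-annulus description and the annulus-end description, so that no sign is dropped when the double indices around a point $x$ are identified across the two settings, and one must make sure that the ``constant ambiguity'' in the local primitives genuinely washes out — which it does precisely because $\omega,\eta$ are of the second kind and hence residue-free at every point of $\cz$, so Lemma~\ref{locdoubprop}\eqref{locdoubprop1} kills the constants. Once these points are checked the identification with $\gpair{~,~}$ and hence with the cup product is immediate from the results of Section~\ref{sec:double}.
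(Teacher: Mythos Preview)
Your argument contains two genuine errors. First, ``of the second kind'' does \emph{not} mean holomorphic: it means all residues vanish, but poles of order $\ge 2$ along $\cz$ are allowed. So the claim that $\omega,\eta$ ``extend holomorphically across every residue disc'' and have ``genuine holomorphic primitives $F_x^{\mathrm{hol}}$'' on $D_x$ is false in general. (What is true is that, since $\res_x\omega=0$, there is a \emph{meromorphic} Laurent primitive near $x$; that already suffices for the constant-elimination step you want.) Second, and more seriously, $U=X-\rigZ$ is \emph{not} a wide open space in the sense of this paper: the definition preceding~\eqref{discoid} explicitly requires $X$ to have a smooth proper model, whereas here $X$ has only semi-stable reduction. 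So there is no Coleman-style global pairing~\eqref{eq:globpair} on $U$ to compare with, and your reduction ``$\apair{\omega,\eta}=\gpair{\omega,\eta}$ on the wide open $U$'' has no meaning. You are conflating the two distinct global pairings~\eqref{eq:globalpair} (Vologodsky, valid for any $X$) and~\eqref{eq:globpair} (Coleman, only on wide opens).

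A repaired version of your idea does go through: once you observe, via Lemma~\ref{locdoubprop}\eqref{locdoubprop1} and $\res_x\omega=\res_x\eta=0$, that each $\pair{F_v,G_v}_x$ is independent of the integration constants, you may take $F_v=F|_{U_v}$ and $G_v=G|_{U_v}$ for Vologodsky primitives $F,G$ on $X-\cz$; then $\apair{\omega,\eta}=\sum_x\pair{F,G}_x=\gpair{\omega,\eta}$ in the sense of~\eqref{eq:globalpair}, and~\eqref{eq:gglrest} gives $[\omega]\cup[\eta]$. The paper's own proof is more elementary still and avoids invoking the Vologodsky global pairing altogether: after the same constant-elimination, one uses the defining property of the double index (Definition~\ref{doubdef}) with $\res_x\omega=0$ to write $\pair{F_v,G_v}_x=\res_x\!\big((\textstyle\int\omega)\cdot\eta\big)$ for any local (Laurent) primitive of $\omega$, and then recognizes $\sum_{x\in\cz}\res_x\big(\eta\int\omega\big)$ as the classical residue formula for the cup product on $\hdr^1(X/K)$.
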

\begin{proof}
In fact, by~\eqref{locdoubprop1} of Lemma~\ref{locdoubprop} as $\res_x \eta=0 $ the local pairing $\pair{F_v,G_v}_x$ does not depend on the constant of integration appearing in $F_v$, so we might as well pick any local integral of $\omega$ at $x$, hence
\begin{equation*}
    \apair{\omega,\eta} = \sum_v \sum_{x\in \cz} \res_x \left(\eta\cdot \int \omega \right)
\end{equation*}
which is the familiar expression for the cup product.
\end{proof}
Let us now assume that $\omega$ and $\eta$ are general . Keeping the auxiliary choices as above we associate with the forms cochains on $\Xgrph$ by assigning, for an oriented edge $e$ the value 
\begin{equation}
  c(e)=F_\omega^{e^-}|_{e} - F_\omega^{e^+}|_{e} \in K\label{eq:cassigned}\;.
\end{equation}
The form $\omega$ has an associated cohomology class $[\omega] \in \hdr^1(X-\cz /K)$, which is given in the representation~\eqref{cocycle} by
\begin{equation}\label{thkind}
  \left((\omega_{U_v})_{v\in V},(0)\right)
\end{equation}
so the following is immediate from \eqref{thkind} and \eqref{phsplit}.
\begin{proposition}\label{creps}
  The cochain $c$ represents the graph cohomology class $\phsp([\omega])$.
\end{proposition}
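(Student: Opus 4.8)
The plan is to unwind the definitions of both sides and check they agree as elements of $C^1(\Xgrph,K)$, then invoke Theorem~\ref{thmharmonic} (or rather the fact that the splitting $\phsp$ of~\eqref{phsplit} is a genuine splitting at the level of cochains, not just cohomology classes) to conclude. Concretely, $\phsp([\omega])$ is computed by~\eqref{phsplit} applied to any cocycle~\eqref{cocycle} representing $[\omega]\in\hdr^1(X-\cz/K)$. By~\eqref{thkind}, such a representative is $\left((\omega_{U_v})_{v\in V},(0)_{e\in E}\right)$, where $\omega_{U_v}$ denotes the restriction of $\omega$ to $U_v-\cz$; one should first double-check that this really is a cocycle in the sense of~\eqref{cocycle}, i.e. that $d(0) = \omega_{e^+}|_e - \omega_{e^-}|_e$ on each annulus $e$, which holds because $\omega$ is a global form so its restrictions to the two vertices glue (both equal $\omega|_e$), making the right-hand side zero.

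Next I would substitute $f_e = 0$ and $\omega_v = \omega_{U_v}$ into~\eqref{phsplit}. This gives
\begin{equation*}
  \phsp([\omega])(e) = 0 - \left(F_{\omega_{e^+}}|_e - F_{\omega_{e^-}}|_e\right) = F_{\omega}^{e^-}|_e - F_{\omega}^{e^+}|_e\;,
\end{equation*}
where $F_{\omega}^{v}$ is the chosen Coleman integral $F_v$ of $\omega$ on $U_v-\cz$. This is exactly the value $c(e)$ assigned in~\eqref{eq:cassigned}. Hence $c$ and $\phsp([\omega])$ agree as $1$-cochains on $\Xgrph$, so in particular they represent the same class in $H^1(\Xgrph,K)$, which is the assertion. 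One small point to verify along the way is that the cochain $c$ of~\eqref{eq:cassigned} is well-defined, i.e. antisymmetric and $K$-valued: the difference $F_\omega^{e^-}|_e - F_\omega^{e^+}|_e$ has differential $\omega_{e^-}|_e - \omega_{e^+}|_e = 0$, so it is a constant on the annulus $e$, and swapping the orientation of $e$ swaps $e^+$ and $e^-$, changing the sign, exactly as required by the definition of $C^1(\Xgrph,K)$.

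I do not expect a genuine obstacle here; the statement is essentially a bookkeeping identity matching the formula~\eqref{phsplit} of Coleman--Iovita against the ad hoc cochain~\eqref{eq:cassigned}. The only place that requires a moment's care is the compatibility of conventions: the orientation convention of Definition~\ref{convention1} must be the one implicit in writing~\eqref{phsplit} (so that $e^+$ and $e^-$ play the roles Coleman--Iovita intend), and the sign in~\eqref{eq:cassigned} must be read off consistently. Once those are pinned down the proof is the one-line computation above, so the write-up can be kept very short — essentially just quoting~\eqref{thkind}, plugging into~\eqref{phsplit}, and observing the result is~\eqref{eq:cassigned}.
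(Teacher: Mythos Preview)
Your proposal is correct and follows exactly the paper's approach: the paper simply declares the proposition ``immediate from~\eqref{thkind} and~\eqref{phsplit},'' and your unwinding---plugging the representative $\left((\omega|_{U_v}),(0)\right)$ into~\eqref{phsplit} to obtain~\eqref{eq:cassigned}---is precisely that one-line computation spelled out. The extra checks you include (that~\eqref{thkind} is a cocycle, that $c$ is antisymmetric and constant-valued, and the orientation compatibility) are reasonable sanity remarks but not additional ideas.
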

Let $d$ be a cochain similarly associated with $\eta$.
\begin{theorem}\label{globcupthm}
We have
\begin{equation*}
   \apair{\omega,\eta}
  = \sum_v \gipair{\omega,\eta}
 +  \sum_{e\in E/\pm }\left( c(e) \res_e \eta   -d(e)\res_e \omega\right)\;.
\end{equation*}
where $\gipair{~,~}$ is the global pairing on $U_v-\cz$~\eqref{eq:globpair} (as extended by Remark~\ref{extend})
\end{theorem}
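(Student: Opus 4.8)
The strategy is to compute $\apair{\omega,\eta}$ by a bookkeeping argument that moves the auxiliary local Coleman integrals $F_v, G_v$ at the points of $\cz$ into genuine local integrals on the wide open spaces $U_v - \cz$, and then tracks the discrepancies along the annuli ends. First I would observe that for each vertex $v$, the restriction of $\omega$ to $U_v - \cz$ has a Coleman primitive, and by the defining property of the double index (Definition~\ref{doubdef}), the sum $\sum_{x\in \cz\cap U_v}\pair{F_v,G_v}_x$ can be related to $\gipair{\omega,\eta}$, which by Definition's analogue \eqref{eq:globpair} for $U_v - \cz$ is the sum of the double indices over the \emph{annuli ends} $e\in \End(U_v)$ of the same $F_v,G_v$ — but with a correction coming from the fact that the global index pairing on $U_v - \cz$ (as extended by Remark~\ref{extend}) is defined so that the total residue-type contributions vanish. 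Concretely, I would invoke the fact, implicit in Proposition~\ref{grig} and Remark~\ref{extend}, that on the wide open $U_v - \cz$ the sum of all local double indices over the boundary annuli \emph{plus} the points of $\cz$ is zero when both forms are exact on $U_v$ — but here $\omega,\eta$ are not exact on $U_v$, they have residues at $\cz$, so the failure is exactly measured by residue terms.

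The key computational step is thus a Stokes-type / residue-theorem identity on each $U_v - \cz$: for the chosen Coleman primitives $F_v$ of $\omega$ and $G_v$ of $\eta$, one has
\begin{equation*}
  \sum_{x\in \cz\cap U_v}\pair{F_v,G_v}_x \;+\; \sum_{e\in \End(U_v)}\pair{F_v,G_v}_e \;=\; 0\;,
\end{equation*}
or rather a version of it; this is essentially the statement that the double index, summed over all the "boundary" of $U_v - \cz$, vanishes — a reformulation of the residue theorem for the curve $T_v$. Rearranging gives $\sum_{x\in\cz\cap U_v}\pair{F_v,G_v}_x = -\sum_{e\in\End(U_v)}\pair{F_v,G_v}_e$. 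Summing over $v$, and noting that each unoriented edge $e\in E/\pm$ is an annulus end of \emph{two} wide open spaces $U_{e^+}$ and $U_{e^-}$, with \emph{opposite} orientations (Lemma~\ref{locdoubprop}(2)), the contributions from $\gipair{\omega,\eta}$ — defined using the \emph{global} Coleman primitives $F_\omega, G_\eta$ of $\omega,\eta$ when these exist, or more carefully, the point is that on the overlap annulus $e$ the two local primitives $F_{e^+}, F_{e^-}$ differ by the \emph{constant} $c(e)$ of \eqref{eq:cassigned} — do not exactly cancel: the mismatch is governed by $\pair{F_{e^-}|_e,G_{e^-}|_e}_e - \pair{F_{e^+}|_e,G_{e^+}|_e}_e$, which expands, using $F_{e^-}|_e = F_{e^+}|_e + c(e)$ and $G_{e^-}|_e = G_{e^+}|_e + d(e)$ together with the bilinearity of $\pair{~,~}_e$ and Lemma~\ref{locdoubprop}(1), into $c(e)\,\res_e\eta - d(e)\,\res_e\omega$ plus a term $\pair{c(e),d(e)}_e$ which vanishes as both entries are constants. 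This produces exactly the stated edge sum $\sum_{e\in E/\pm}\bigl(c(e)\res_e\eta - d(e)\res_e\omega\bigr)$ and leaves $\sum_v \gipair{\omega,\eta}$ as the remaining term.

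The main obstacle I anticipate is making precise the interplay between the \emph{local} Coleman primitives $F_v$ (chosen per vertex on $U_v - \cz$), which are well-defined up to a constant but genuinely differ across the overlaps, and the global index pairing $\gipair{~,~}$ on $U_v - \cz$, whose definition \eqref{eq:globpair} already sums the double indices over $\End(U_v)$ — so one must be careful not to double-count and must check that the \emph{same} primitives $F_v$ may be used throughout. A secondary subtlety is the bookkeeping of orientations: the convention of Definition~\ref{convention1} dictates that the annulus $e$ is oriented as the end of $U_{e^+}$, so when it appears as an end of $U_{e^-}$ it carries the reversed orientation, and one must verify that all signs in the residue and double-index expressions are consistent with \eqref{eq:orientation} and Lemma~\ref{locdoubprop}(2). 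Once these two points are pinned down the identity follows by collecting terms; the residue-theorem input on each $T_v$ is standard, and the constant-entry vanishing $\pair{c,d}_e = 0$ for $c,d\in K$ is immediate from Lemma~\ref{locdoubprop}(1) together with $\res_e\,\dd c = 0$.
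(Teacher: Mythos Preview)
Your edge-by-edge bookkeeping---expanding $\pair{F_{e^+}+c(e),G_{e^+}+d(e)}_e$ by bilinearity and using Lemma~\ref{locdoubprop}(1) to turn the constant entries into residues---is exactly what the paper does, and that part is correct. The gap is in your ``Stokes-type identity''
\[
  \sum_{x\in \cz\cap U_v}\pair{F_v,G_v}_x \;+\; \sum_{e\in \End(U_v)}\pair{F_v,G_v}_e \;=\; 0\,.
\]
This is false. By the very definition of the global pairing on $U_v-\cz$ (the pairing~\eqref{eq:globpair} as extended in Remark~\ref{extend}, where the ``ends'' of $U_v-\cz$ comprise the annuli ends of $U_v$ \emph{together with} the width-zero annuli at the points of $\cz\cap U_v$), the left-hand side above is not zero but is precisely $\gipair{\omega,\eta}$. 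The vanishing you invoke from Proposition~\ref{grig} holds only when one of the forms is exact on $U_v$; for general $\omega,\eta$ the ``failure'' is not merely some residue correction---it \emph{is} the global pairing itself. If you run your argument with the false identity you obtain only $\apair{\omega,\eta}=\sum_{e\in E/\pm}\bigl(c(e)\res_e\eta - d(e)\res_e\omega\bigr)$, and the term $\sum_v\gipair{\omega,\eta}$ never appears; your closing phrase ``leaves $\sum_v \gipair{\omega,\eta}$ as the remaining term'' has no source in the computation as written.

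The fix is immediate and is exactly the paper's argument: replace your identity by the correct one,
\[
  \sum_{x\in \cz\cap U_v}\pair{F_v,G_v}_x \;=\; \gipair{\omega,\eta} \;-\; \sum_{e^+=v}\pair{F_v,G_v}_e\,,
\]
sum over $v$, and then carry out your edge analysis on $\sum_v\sum_{e^+=v}\pair{F_v,G_v}_e$. Each unoriented edge contributes $\pair{F_{e^+},G_{e^+}}_e - \pair{F_{e^-},G_{e^-}}_e$ (via Lemma~\ref{locdoubprop}(2)), which by your own expansion equals $-(c(e)\res_e\eta - d(e)\res_e\omega)$. Moving that edge sum to the other side yields the theorem.
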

\begin{proof}
We add $ \sum_v \left(\sum_{e^+=v} \pair{F_v,G_v}_{e}\right) $ to both sides of~\eqref{auxil}. On the right hand side, we clearly get, for each vertex $v$ the global index on $U_v-\cz$ hence
\begin{equation*}
  \apair{\omega,\eta} +
  \sum_v \left(\sum_{e^+=v} \pair{F_v,G_v}_{e}\right)
  = \sum_v \gipair{\omega,\eta}\;.
\end{equation*}
To analyze the second sum on the left hand side of the equation, note that each annulus occurs twice with reverse orientation. Thus, by Lemma~\ref{locdoubprop} and Theorem~\ref{beszer} this sum becomes the following sum over the set $E/\pm$ of unoriented edges (see Definition~\ref{pointwise}). 
\begin{equation*}
  \sum_{e\in E/\pm } \pair{F_{e^+},G_{e^+}}_e - 
  \pair{F_{e^-},G_{e^-}}_e = 
  \sum_{e\in E/\pm } \left(\pair{F_{e^+},G_{e^+}}_e -
 \pair{F_{e^+}+c(e),G_{e^+}+d(e)}_e \right)
\end{equation*}
and each term in brackets becomes, by bilinearity and Lemma~\ref{locdoubprop} again
\begin{equation*}
-\pair{c(e),G_{e^+}}_e
-\pair{F_{e^+},d(e)}_e
-\pair{c(e),d(e)}_e
= d(e)\res_e \omega - c(e) \res_e \eta\;.
\end{equation*}
This completes the proof.
\end{proof}
The above Theorem suggests the following.
\begin{definition}\label{hpairdef}
     We define the pairing $\hpair{~,~}$ on $\hdr^1(X-\cz/K)$ by
    \begin{equation*}
      \hpair{\alpha,\beta} =  \sum_v \gipair{~,~}(\rest(\alpha),\rest(\beta)) + \chi(\alpha)\cdot N(\beta) - \chi(\beta)\cdot N(\alpha)\;,
    \end{equation*}
    where the first summand refers to the sum of the global pairings on the $U_v$'s applied to the image of $\alpha$ and $\beta$ under the restriction map in~\eqref{mayer} and the last two terms are the pointwise products in graph cohomology, Definition~\ref{pointcoh}, applied to the images of $N$ and $\chi$ viewed as maps into graph cohomology. 
\end{definition}
\begin{proposition}\label{hpall}
  The following holds:
  \begin{enumerate}
  \item \label{hp1} Let $\omega,\eta \in H^0(X-\cz,\Omega^1)$ with cohomology classes $[\omega],[\eta]\in \hdr^1(X-\cz/K) $ and choose the auxiliary Coleman integrals such that the corresponding cochains $c$ and $d$ as in~\eqref{eq:cassigned} are harmonic (which can be done by Theorem~\ref{thmharmonic}). Then
    $\hpair{[\omega],[\eta]}=\apair{\omega,\eta}$.
  \item \label{hp2} For any $\alpha,\beta\in \hdr^1(X/K)$ we have $\alpha\cup \beta = \hpair{\alpha|_{X-\cz},\beta|_{X-\cz}}$.
  \item \label{hp25}  The pairing $\hpair{~,~}$ does not change if we blow up a point of the special fiber.
  \item \label{hp3} The pairing $\hpair{~,~}$ is compatible with both Frobenius and monodromy in the following sense:
    \begin{enumerate}
    \item $\gpair{\phi^\ast \alpha,\phi^\ast \beta} = q \gpair{\alpha,\beta}$.
    \item $\gpair{N \alpha,\beta}+\gpair{\alpha,N\beta} =0$.
    \end{enumerate}
    Here, the Frobenius and monodromy operators are coming from the comparison with \'etale cohomology, so Assumption~\ref{split-div-ass} is not made.
  \end{enumerate}
\end{proposition}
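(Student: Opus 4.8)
The plan is to prove the four parts roughly in the order stated, since each feeds into the next. For part~\eqref{hp1}, I would start from Theorem~\ref{globcupthm}, which already expresses $\apair{\omega,\eta}$ as $\sum_v \gipair{\omega,\eta} + \sum_{e\in E/\pm}(c(e)\res_e\eta - d(e)\res_e\omega)$. Compare this term by term with Definition~\ref{hpairdef} applied to $[\omega],[\eta]$. The first summands agree once I identify $\rest([\omega])$ with $(\omega_v)_v$ under~\eqref{mayer}. For the remaining terms, by Proposition~\ref{creps} the cochain $c$ represents $\chi([\omega])$ and similarly $d$ represents $\chi([\eta])$; by the formula for $N$ the cochain $e\mapsto \res_e\omega$ represents $N([\omega])$. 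If $c$ and $d$ are chosen harmonic, then $\chi([\omega])\cdot N([\eta])$ is literally $\sum_{e\in E/\pm} c(e)\res_e\eta$ by Definition~\ref{pointwise}, because the pointwise product of a harmonic representative against any representative of the other class is well-defined (the remark after Theorem~\ref{thmharmonic}); likewise for the other term. This gives the equality. One must check that harmonic $c,d$ can indeed be realized by a choice of Coleman integrals $F_v,G_v$: adding a constant $a_v$ to $F_v$ changes $c$ by $da$ where $a\in C^0(\Gamma,K)$, so by Theorem~\ref{thmharmonic} the harmonic representative of $\chi([\omega])$ is reachable.

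For part~\eqref{hp2}, take $\omega,\eta$ of the second kind on $X$ representing $\alpha,\beta$. Then $N([\omega])=N([\eta])=0$ since the classes come from $\hdr^1(X/K)$ (there are no residues), so the pointwise-product terms in Definition~\ref{hpairdef} vanish and $\hpair{\alpha|_{X-\cz},\beta|_{X-\cz}} = \sum_v \gipair{\omega,\eta}$. But for second-kind forms this sum is exactly $\apair{\omega,\eta}$ by part~\eqref{hp1} (the cochains $c,d$ are then coboundaries, in particular their harmonic parts vanish), and $\apair{\omega,\eta}=[\omega]\cup[\eta]=\alpha\cup\beta$ by Lemma~\ref{cupaux}. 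Alternatively one can invoke Proposition~\ref{grig}\eqref{grig4} and the compatibility~\eqref{eq:gglrest} directly, but routing through part~\eqref{hp1} and Lemma~\ref{cupaux} is cleanest.

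Part~\eqref{hp25} is a bookkeeping check: blowing up a point $x$ of the special fiber replaces a vertex (or subdivides an edge) in $\Xgrph$ by inserting a new $\mathbb{P}^1$-component. The new component's wide open $U_{x}$ has rational reduction, so $\gpair{~,~}_{U_x-\cz}$ is trivial by Proposition~\ref{grig}\eqref{grig4}; the pairings on the other $U_v$ are unchanged by Proposition~\ref{grig}(3) and Remark~\ref{extend}; and graph cohomology together with its pointwise product, and the operators $N,\chi$, are invariant under edge subdivision. Assembling these gives invariance of $\hpair{~,~}$. For part~\eqref{hp3}, the Frobenius and monodromy scaling properties are checked on the two pieces of Definition~\ref{hpairdef} separately: on the graph cohomology summand $\phi$ acts as $1$ and $N$ is nilpotent with the expected shift, so the weight bookkeeping in $\chi(\alpha)\cdot N(\beta) - \chi(\beta)\cdot N(\alpha)$ produces the factor $q$ and the antisymmetry; on each $\gipair{~,~}_{U_v-\cz}$ one uses that this pairing factors through rigid cohomology of $T_v-\cZ_s$ where Frobenius acts with the stated weights (cf.\ the discussion around~\eqref{phsplit}), as in~\cite[Prop.~4.10]{Bes98b}. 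The main obstacle I expect is part~\eqref{hp3}: one has to reconcile the \emph{linear} Frobenius used here with the semilinear one of Coleman--Iovita and track the weight filtration carefully across the extension~\eqref{mayer1}, making sure the cross terms in the pointwise product transform correctly; the other three parts are essentially immediate given Theorem~\ref{globcupthm}, Lemma~\ref{cupaux} and Proposition~\ref{grig}.
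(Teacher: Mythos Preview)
Parts~\eqref{hp1} and~\eqref{hp2} are handled essentially as in the paper and are correct.

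For part~\eqref{hp25} there is a genuine gap. The assertion that ``graph cohomology together with its pointwise product \ldots\ are invariant under edge subdivision'' is false: the pointwise product of Definition~\ref{pointcoh} is \emph{not} a subdivision invariant of $H^1(\Gamma,K)$. On a single loop the harmonic generator $c$ has $c\cdot c=1$, but after subdividing the edge the harmonic representative of the same cohomology class takes the value $1/2$ on each new edge, so the product drops to $1/2$. What is invariant is the specific sum $\sum_e c(e)\,\res_e\eta$ with $c$ harmonic, and this needs the explicit check the paper carries out. The paper distinguishes two cases. Blowing up a smooth point adds a rational component and a single new edge; taking $F_{v_0}=F_v|_{U_{v_0}}$ gives $c=d=0$ on that edge, and the new component contributes $0$ by Proposition~\ref{grig}\eqref{grig4}. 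Blowing up a node subdivides an edge $e$ into $e_1,e_2$; harmonicity at the new valence-two vertex forces $c(e_1)=c(e_2)=c(e)/2$, while the residues satisfy $\res_{e_1}\eta=\res_{e_2}\eta=\res_e\eta$ (they do \emph{not} halve), so the two new contributions add up to the old one. The compensation between the halving of the harmonic cochain and the unchanged residue over twice as many edges is the actual content of the argument, and your abstract invariance claim hides exactly this.

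For part~\eqref{hp3} you are missing a structural step the paper makes explicit: the Coleman--Iovita description of Frobenius you rely on (and the reference to rigid cohomology of $T_v-\cZ_s$) requires Assumption~\ref{split-div-ass}. The paper first proves~\eqref{hp3} under that assumption and then invokes~\eqref{hp25}, together with the fact that sections can be separated by blowups, to remove it; this is why~\eqref{hp25} comes first. Apart from that, your Frobenius outline via $\chi\circ\phi^\ast=\chi$ and $N\circ\phi^\ast=qN$ matches the paper. For the monodromy identity the paper computes directly from Definition~\ref{hpairdef}: since $\rest\circ N=0$, $N^2=0$, and $\chi\circ N=N$, one gets $\hpair{N\alpha,\beta}=N\alpha\cdot N\beta$ and $\hpair{\alpha,N\beta}=-N\beta\cdot N\alpha$, which sum to zero.
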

\begin{proof}
Assertion~\eqref{hp1} follows, using Proposition~\ref{creps}, because the definition~\ref{pointcoh} of the pointwise product on graph cohomology precisely requires to take a harmonic representative to one of the terms. Then, assertion~\eqref{hp2} is immediate from Lemma~\ref{cupaux} where auxiliary choices do not matter. For~\eqref{hp25} consider first blowing up a smooth point. In this case there is an additional rational component and an additional edge $e$ to the graph, but the global pairing corresponding to this component is $0$ by~\eqref{grig4} of Proposition~\ref{grig} and because we already have a Coleman function defined there it is easy to see that we have $c(e)=d(e)=0$, so the pairing does not change. On the other hand, if we blow up an intersection point of two components we split and edge $e$ into two edges $e_1$ and $e_2$ with a vertex corresponding to a rational component. The rational component again contributes a $0$. Assuming that the harmonicity condition is imposed on the cochains $c$ and $d$ we see that harmonicity forces $c(e_1)=c(e_2)=c(e)/2$ and the same for $d$, while $\res_e=\res_{e_1}=\res_{e_2}$ so again the pairing does not change. We prove~\eqref{hp3} under Assumption~\ref{split-div-ass}, and then use~\eqref{hp25} together with the fact that we may separate sections by blowing up.
The first property follows from the corresponding property for the global pairings on wide opens~\cite[Prop.~4.10]{Bes98b}, the functoriality of the restriction maps and the equations $\chi \circ \phi^\ast = \chi$ ($\chi$ is a projection on the Frobenius invariant part) and $N\circ \phi^\ast = q \circ N$. For the second property we note that the image of $N$ is in graph cohomology, hence vanishes under restriction by~\eqref{mayer}. Also, $N^2=0$ and $\chi\circ N=N$ because the image of $N$ is already in the Frobenius invariant part, so that from corollary~\ref{cortothm} we get
\begin{equation*}
  \hpair{N \alpha,\beta} = N\alpha \cdot N \beta\;,\;
  \hpair{\alpha,N \beta} = -N\alpha \cdot N \beta\;,
\end{equation*}
completing the proof.
\end{proof}
\begin{corollary}\label{psinot}
  Define a map $\Psi_0: \hdr^1(X-\cz) \to \hdr^1(X)$ to be the adjoint of the restriction map with respect to the pairing $\hpair{~,~}$ and cup product pairings (same as in Definition~\ref{psidef} but with $\hpair{~,~}$ instead of $\gpair{~,~}$). Then $\Psi_0$ is the unique Frobenius and monodromy equivariant splitting of the restriction map  $\hdr^1(X) \to \hdr^1(X-\cz) $.
\end{corollary}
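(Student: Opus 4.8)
The plan is to check three things in turn: (i) that $\Psi_0$ is a left inverse of the restriction map $\operatorname{res}\colon\hdr^1(X/K)\to\hdr^1(X-\cz/K)$; (ii) that $\Psi_0$ commutes with Frobenius and monodromy; and (iii) that a Frobenius- and monodromy-equivariant left inverse of $\operatorname{res}$ is unique. The inputs will be parts~\eqref{hp2} and~\eqref{hp3} of Proposition~\ref{hpall}, the nondegeneracy of the cup product on $\hdr^1(X/K)$ (Poincar\'e duality), the residue exact sequence of the pair $(X,\cz)$, and the assumption~\eqref{Vass} on $V$.

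Points (i) and (ii) are pure adjunction. By construction $\Psi_0(x)\cup y = \hpair{x,y|_{X-\cz}}$ for $x\in\hdr^1(X-\cz/K)$ and $y\in\hdr^1(X/K)$. Feeding in $x=y'|_{X-\cz}$ with $y'\in\hdr^1(X/K)$ and using~\eqref{hp2} gives $\Psi_0(y'|_{X-\cz})\cup y = y'\cup y$ for every $y$, so $\Psi_0\circ\operatorname{res}=\operatorname{id}$ by nondegeneracy of $\cup$; hence (i). For (ii) one uses that $\operatorname{res}$ is induced by the open immersion $X-\cz\hookrightarrow X$ and so is Frobenius- and monodromy-equivariant, that $\phi$ is invertible on $\hdr^1(X/K)$, and that the cup product there satisfies $\phi\alpha\cup\phi\beta=q(\alpha\cup\beta)$ and $(N\alpha)\cup\beta+\alpha\cup(N\beta)=0$ (both consequences of~\eqref{hp2} and~\eqref{hp3}). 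Then for $y\in\hdr^1(X/K)$, writing $y=\phi(\phi^{-1}y)$,
\[
\Psi_0(\phi x)\cup y=\hpair{\phi x,\phi\bigl((\phi^{-1}y)|_{X-\cz}\bigr)}=q\,\hpair{x,(\phi^{-1}y)|_{X-\cz}}=q\,\bigl(\Psi_0(x)\cup\phi^{-1}y\bigr)=\phi(\Psi_0 x)\cup y,
\]
and likewise $\Psi_0(Nx)\cup y=\hpair{Nx,y|_{X-\cz}}=-\hpair{x,(Ny)|_{X-\cz}}=-\bigl(\Psi_0(x)\cup Ny\bigr)=(N\Psi_0 x)\cup y$. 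Since $y$ is arbitrary and $\cup$ is nondegenerate, $\Psi_0\phi=\phi\Psi_0$ and $\Psi_0 N=N\Psi_0$.

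For (iii) it suffices to show $\Hom_{(\phi,N)}\!\bigl(Q,\hdr^1(X/K)\bigr)=0$, where $Q=\operatorname{coker}(\operatorname{res})$, since the set of Frobenius- and monodromy-equivariant left inverses of $\operatorname{res}$ is a torsor under this group and is nonempty by (i)--(ii). The residue exact sequence $0\to\hdr^1(X/K)\to\hdr^1(X-\cz/K)\xrightarrow{\res}\bigoplus_{x\in\cz}K(-1)\to K(-1)\to 0$ identifies $Q$ with a direct sum of copies of $K(-1)$; in particular $\phi$ acts on $Q$ by $q$ and $N$ acts on $Q$ by $0$. A $(\phi,N)$-equivariant map $Q\to\hdr^1(X/K)$ therefore has image inside $\hdr^1(X/K)^{\phi=q,\,N=0}$, and via the Faltings identification~\eqref{Faltings} together with the Tate twist in $V=\het^1(\Xbar,\Qp(1))$ this space is exactly $\Dcris(V)^{f=1}$, which vanishes by~\eqref{Vass}. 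Hence the map is zero, $\Psi_0$ is the unique such splitting, and it coincides with the map appearing in~\eqref{fN1}.

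I expect the only delicate part to be the twist bookkeeping that matches $\hdr^1(X/K)^{\phi=q,\,N=0}$ with $\Dcris(V)^{f=1}$; alternatively one may cite the (classical) monodromy--weight theorem for $\hdr^1$ of a semistable curve, which forces $\ker N$ to meet the pure weight-$2$ subspace in $0$. A second point to be careful about is that the exact sequences and the Frobenius/monodromy structures used here are the ones coming from the comparison with \'etale cohomology, so that Assumption~\ref{split-div-ass} is not needed -- this is precisely the content of part~\eqref{hp3} of Proposition~\ref{hpall}.
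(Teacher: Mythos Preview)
Your proof is correct and follows the same adjunction strategy as the paper: the paper's argument is essentially ``uniqueness is \nekovar; equivariance follows from part~\eqref{hp3} of Proposition~\ref{hpall},'' leaving the verification that $\Psi_0$ is a left inverse (your step (i), via~\eqref{hp2}) implicit. Your steps (i) and (ii) spell out exactly the computations the paper gestures at.

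The one genuine difference is in (iii). The paper simply cites \nekovar\ for uniqueness, whereas you supply a direct argument via the residue exact sequence and assumption~\eqref{Vass}. This is a legitimate and slightly more self-contained alternative: your observation that $\Hom_{(\phi,N)}(Q,\hdr^1(X/K))$ lands in $\hdr^1(X/K)^{\phi=q,\,N=0}$ and that this vanishes by~\eqref{Vass} is exactly the content of \nekovar's uniqueness in this particular case. The twist bookkeeping you flag is indeed the only point requiring care (matching the linear $\phi=q$ eigenspace on $\hdr^1(X/K)$ with $\Dcris(V)^{f=1}\otimes_{K_0}K$), but it is routine. Note also that the paper's phrase ``equivariance with respect to a power of Frobenius'' is there to absorb the passage between the semi-linear $f$ on $\Dst$ and the $K$-linear $\phi$ used in~\eqref{hp3}; your argument, working entirely with the $K$-linear operator, sidesteps this in the same way.
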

\begin{proof}
Uniqueness was proved by \nekovar. We may thus show equivariance with respect to a power of Frobenius. The equivariance now follows easily from part~\eqref{hp3} of the last Proposition.
\end{proof}

\section{End of the proof}
\label{sec:end}

We now recall the relation between Vologodsky integration and Coleman integration.

\begin{theorem}[\cite{Bes-Zer13}]\label{beszer}
  Let $\omega$ be a meromorphic form on $X$ and let $F_\omega$ be a Vologodsky integral of $\omega$. Then, for each vertex $v$ of $\Xgrph$ there exist Coleman integrals $F_\omega^v$ of $\omega|_{U_v}$ such that $F_\omega $ equals $F_\omega^v$ on $U_v(K)$. For an oriented edge $e$ let $c(e)=F_\omega^{e^-}|_{e} - F_\omega^{e^+}|_{e} \in K$. Then $c$ is a harmonic cochain on $\Xgrph$ with values in $K$,  $c\in \hh(\Xgrph,K)$.
\end{theorem}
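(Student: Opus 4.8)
The plan is to reduce the statement about Vologodsky integrals to the local-to-global principle that characterizes Vologodsky integration on curves with semi-stable reduction. Recall that a Vologodsky primitive $F_\omega$ is, by construction, a locally analytic function on $X(K)$ that is analytic on each residue disc and branches logarithmically along the annuli; concretely, after choosing the branch of the logarithm fixed above, $F_\omega$ restricted to each wide open $U_v - \cz$ lies in the appropriate space $\alog(U_v-\cz)$. First I would invoke the defining property of Vologodsky integration in the semi-stable case (this is precisely the content of \cite{Bes-Zer13}, or can be taken as its input): on each $U_v$ the function $F_\omega$ agrees on $K$-points with an honest Coleman integral $F_\omega^v$ of $\omega|_{U_v}$. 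The point is that Coleman integrals on a wide open are unique up to an additive constant, so $F_\omega^v$ is well-defined up to a constant, and the only thing that genuinely varies from vertex to vertex is how these constants are pinned down by the requirement that they glue to the single function $F_\omega$.

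Next I would analyze the cochain $c(e) = F_\omega^{e^-}|_e - F_\omega^{e^+}|_e$. On the annulus $e$ both $F_\omega^{e^+}$ and $F_\omega^{e^-}$ are Coleman primitives of $\omega|_e$, hence both lie in $\aloga(e) = \OO(e) + K\cdot\log(z_e)$, and their difference is again a Coleman primitive of the zero form on $e$; but a Coleman-locally-analytic function on an annulus with vanishing differential is a constant, so $c(e)\in K$ is well-defined. Antisymmetry $c(-e) = -c(e)$ is immediate from the definition, so $c\in C^1(\Xgrph,K)$. The substantive claim is harmonicity, i.e. $d^\ast c = 0$: for a fixed vertex $v$ we must show $\sum_{e^+ = v} c(e) = 0$. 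I would prove this by comparing the two descriptions of how $F_\omega$ branches. Fix $v$ and write $F = F_\omega^v$, a Coleman integral on $U_v - \cz$. For each edge $e$ with $e^+ = v$, the restriction $F|_e$ and the restriction $F_\omega^{e^-}|_e$ are both Coleman primitives of $\omega$ on the annulus $e$, so they differ by the constant $c(e)$. The key input is that the \emph{single} globally defined locally analytic function $F_\omega$ has a well-defined germ at the vertex $v$, and the logarithmic parts contributed at the various annuli ends of $U_v$ must be consistent with a single choice of branch of the logarithm on $U_v$; tracking the additive constants around all the annuli ends incident to $v$ forces the alternating sum to close up. More precisely, this is exactly the monodromy/residue bookkeeping: the $\log$-coefficient of $F|_e$ on the annulus end $e$ of $U_v$ is $\res_e\omega$ by the fundamental theorem of calculus on annuli, and the gluing condition $df_e = \omega_{e^+}|_e - \omega_{e^-}|_e$ together with antisymmetry of the cochain $(f_e)$ in the \cech\ description forces $\sum_{e^+=v}\res_e\omega = 0$ — but that is the harmonicity of the \emph{residue} cochain, and one then upgrades it to harmonicity of $c$ itself by the same argument applied to the constant terms, using that $F_\omega$ is single-valued as a locally analytic function (no genuine monodromy in the ``constant'' direction).

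I expect the main obstacle to be the last point: cleanly extracting harmonicity of $c$ rather than merely of the residue cochain $N$. One way around it is to avoid reproving anything and instead to simply cite the comparison theorem of \cite{Bes-Zer13} directly for this statement, since the whole Theorem is attributed to that paper; if a self-contained argument is wanted, the right framework is to observe that a locally analytic function on $X(K)$ which is Coleman on each $U_v$ is a ``Vologodsky function'' precisely when its transition cochain is harmonic — this is the content of the local-to-global description of $\alog$-type functions in the semi-stable setting — and then $F_\omega$ being a bona fide Vologodsky primitive gives harmonicity of $c$ by definition. In the write-up I would state the reduction to \cite{Bes-Zer13} explicitly and include the short verification that $c(e)\in K$ and $c(-e)=-c(e)$, flagging the harmonicity as the part genuinely supplied by the cited comparison theorem.
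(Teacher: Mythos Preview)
The paper does not prove this theorem at all: it is stated as a black-box import from \cite{Bes-Zer13}, with no argument given in the text. So there is no ``paper's own proof'' to compare against, and your instinct at the end of the proposal---to simply cite \cite{Bes-Zer13} for the full statement---is exactly what the paper does.

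That said, a comment on your sketch. The verifications that $c(e)\in K$ (difference of two primitives of $\omega|_e$ on the annulus is a constant) and that $c(-e)=-c(e)$ are correct and easy. But your attempt at harmonicity has a real gap: you first argue that $\sum_{e^+=v}\res_e\omega=0$, which is harmonicity of the \emph{residue} cochain $N([\omega])$, and then assert one can ``upgrade'' this to harmonicity of $c$ ``by the same argument applied to the constant terms.'' That upgrade is not automatic. The residues are intrinsic to $\omega$, but the constants $c(e)$ depend on the particular choice of Coleman primitives $F_\omega^v$, and for a \emph{generic} choice of such primitives (one per component) the resulting cochain is certainly not harmonic---indeed, by Theorem~\ref{thmharmonic} any cochain in the coset $c + dC^0(\Xgrph,K)$ arises from some choice. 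What singles out the harmonic representative is precisely that the $F_\omega^v$ are the restrictions of a \emph{single} Vologodsky integral $F_\omega$, and proving that this forces harmonicity is the actual content of \cite{Bes-Zer13}; it requires the characterization of Vologodsky's construction (via the monodromy-trivial quotient of the Tannakian formalism, or equivalently via the Frobenius-invariant splitting), not just single-valuedness of a locally analytic function. Your final paragraph essentially concedes this, which is the right call.
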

\newcommand{\Kbar}{\overline{K}}
\begin{remark}
This somewhat cryptic formulation is required because the Vologodsky integral is defined as a function on the $K$ points only. In particular, stated this way it it is not at all defined on the annuli $e$. One can make it a function on the $\Kbar$ points, but then it is no longer the restriction of a Coleman function on the annuli. Note furthermore that the cochain $c$ depends on the choice of branch of the logarithm.
\end{remark}

\begin{corollary}\label{cortothm}
    We have $\gpair{~,~}=\hpair{~,~}$ on $\hdr^1(X-\cz/K)$.
\end{corollary}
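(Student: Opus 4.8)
The plan is to deduce the equality from Theorem~\ref{beszer} together with part~\eqref{hp1} of Proposition~\ref{hpall}. Since $X-\cz$ is a smooth affine curve one has $\hdr^1(X-\cz/K)=H^0(X-\cz,\Omega^1)/d\,\OO(X-\cz)$, so it suffices to verify $\gpair{[\omega],[\eta]}=\hpair{[\omega],[\eta]}$ for classes coming from $\omega,\eta\in H^0(X-\cz,\Omega^1)$. Here $\gpair{~,~}$ makes sense on such forms and descends to cohomology by~\eqref{globpropb} of Proposition~\ref{globprop}, while $\hpair{~,~}$ is a pairing on cohomology from the outset.

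So I would fix $\omega,\eta\in H^0(X-\cz,\Omega^1)$ and choose Vologodsky integrals $F=F_\omega$ and $G=G_\eta$. By Theorem~\ref{beszer} there are, for each vertex $v$ of $\Xgrph$, Coleman integrals $F_\omega^v$ of $\omega|_{U_v}$ and $G_\eta^v$ of $\eta|_{U_v}$ agreeing with $F$ and $G$ respectively on $U_v(K)$, and such that the cochains $c,d$ of~\eqref{eq:cassigned} built from them are harmonic on $\Xgrph$. I would then take these $F_\omega^v,G_\eta^v$ as the auxiliary Coleman integrals in the definition~\eqref{auxil} of $\apair{~,~}$. Each point $x\in\cz$ lies in a single $U_v$; near $x$ both $F$ and $F_\omega^v$ belong to $\aloga(x)$ and agree on all $K$-points of $U_v$, hence they coincide in $\aloga(x)$, and likewise $G$ and $G_\eta^v$. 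Therefore $\pair{F,G}_x=\pair{F_\omega^v,G_\eta^v}_x$ for every $x\in\cz$, and summing over $\cz$ gives
\[
  \gpair{\omega,\eta}=\sum_{x\in\cz}\pair{F,G}_x
  =\sum_{v}\sum_{x\in\cz\cap U_v}\pair{F_\omega^v,G_\eta^v}_x
  =\apair{\omega,\eta}
\]
for this choice of auxiliary integrals. Since that choice makes $c$ and $d$ harmonic, part~\eqref{hp1} of Proposition~\ref{hpall} gives $\apair{\omega,\eta}=\hpair{[\omega],[\eta]}$, and combining the two equalities completes the argument.

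The substance is therefore carried entirely by the earlier results: Theorem~\ref{beszer} is used twice — first to identify the globally defined Vologodsky index $\gpair{~,~}$ with the vertex-by-vertex auxiliary pairing $\apair{~,~}$ at the punctures, and second to furnish the harmonicity of $c$ and $d$ that part~\eqref{hp1} of Proposition~\ref{hpall} requires — while Theorem~\ref{globcupthm} is what makes that part true. The one step that is not pure bookkeeping is the matching of the local expansion of the Vologodsky primitive at a puncture $x\in\cz$ with that of the corresponding Coleman piece; this rests on the agreement of the two on $U_v(K)$ together with the fact that a nonzero element of $\aloga(x)$ cannot vanish at every $K$-point of the residue disc. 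I do not expect any further difficulty.
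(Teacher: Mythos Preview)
Your proof is correct and follows exactly the approach of the paper, which simply cites Theorem~\ref{beszer}, part~\eqref{hp1} of Proposition~\ref{hpall}, and the definition~\eqref{auxil}; you have merely spelled out how these three pieces fit together. The only elaboration you add is the remark that the Vologodsky primitive and the Coleman primitive must have the same expansion in $\aloga(x)$ because they agree on $K$-points, but this is essentially how the local term $\pair{F,G}_x$ in the definition of $\gpair{~,~}$ is to be interpreted in the first place (the Vologodsky integral being defined only on $K$-points, its local expansion at $x$ \emph{is} that of the Coleman piece supplied by Theorem~\ref{beszer}), so no extra argument is really needed there.
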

\begin{proof}
This follows immediately from Theorem~\ref{beszer}, \eqref{hp1} of Proposition~\ref{hpall} and~\eqref{auxil}.
\end{proof}
\begin{corollary}
  The two maps $\Psi,\Psi_0: \hdr^1(X-\cz) \to \hdr^1(X)$ of Definition \ref{psidef} and Corollary\ref{psinot} respectively are equal. In particular, by Corollary~\ref{psinot}, $\Psi$ is the unique Frobenius and monodromy equivariant splitting of the restriction map  $\hdr^1(X) \to \hdr^1(X-\cz) $.
\end{corollary}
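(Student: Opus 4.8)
The plan is to deduce the equality of $\Psi$ and $\Psi_0$ directly from the coincidence of the two global pairings on $\hdr^1(X-\cz/K)$ established in Corollary~\ref{cortothm}. Recall that both maps are characterized as an adjoint of one and the same restriction map $\hdr^1(X/K)\to\hdr^1(X-\cz/K)$, with the pairing on the target $\hdr^1(X/K)$ being the cup product in each case: by Definition~\ref{psidef}, $\Psi(x)$ is the unique element of $\hdr^1(X/K)$ satisfying $\Psi(x)\cup y=\gpair{x,y|_{X-\cz}}$ for all $y\in\hdr^1(X/K)$, and by Corollary~\ref{psinot}, $\Psi_0(x)$ is the unique element satisfying the same relation with $\hpair{~,~}$ in place of $\gpair{~,~}$. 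Uniqueness in both cases rests on the non-degeneracy of the cup product pairing on $\hdr^1(X/K)$, which holds by Poincar\'e duality on the smooth proper curve $X$.

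First I would invoke Corollary~\ref{cortothm} to get $\gpair{x,y|_{X-\cz}}=\hpair{x,y|_{X-\cz}}$ for every $x\in\hdr^1(X-\cz/K)$ and $y\in\hdr^1(X/K)$. Combined with the characterizations above this gives $\Psi(x)\cup y=\Psi_0(x)\cup y$ for all $y$, and non-degeneracy of the cup product then forces $\Psi(x)=\Psi_0(x)$. For the ``in particular'' clause I would simply quote Corollary~\ref{psinot}, which identifies $\Psi_0$ with the unique Frobenius and monodromy equivariant splitting of $\hdr^1(X)\to\hdr^1(X-\cz)$; since $\Psi=\Psi_0$, the same description applies to $\Psi$.

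There is essentially no obstacle at this point: the statement is a formal consequence of Corollary~\ref{cortothm} and Poincar\'e duality, all the genuine content having already been packed into Theorem~\ref{beszer} (the comparison of Vologodsky with Coleman integrals), Proposition~\ref{hpall}, and hence Corollaries~\ref{psinot} and~\ref{cortothm}. The only minor points to flag are that one must restrict the second argument to $X-\cz$ before pairing, and that the adjoint characterization is only available because the target cup product is non-degenerate.
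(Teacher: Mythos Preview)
Your proposal is correct and matches the paper's approach: the paper states this corollary without proof, treating it as immediate from Corollary~\ref{cortothm}, since $\Psi$ and $\Psi_0$ are defined as adjoints of the same restriction map with respect to pairings now known to coincide. Your explicit invocation of Poincar\'e duality to justify uniqueness of the adjoint is the one detail the paper leaves tacit.
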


\begin{proof}[{Proof of Theorem~\ref{mainthm}}]
Having shown, under split divisors assumptions, that $\somemap $ is given by Vologodsky integration, while the Frobenius monodromy equivariant splitting $(f,N)$ from~\eqref{fN1} is given by the projection $\Psi$, the proof, including the analysis of field extensions, is now identical with the corresponding proof of Theorem~3.3 in~\cite{Bes99a}
\end{proof}

\end{document}